\newcommand{\M}{\mathfrak{M}}
\newtheorem{theorem}{Theorem}[section]
\newtheorem{lemma}[theorem]{Lemma}
\newtheorem{problem}[theorem]{Problem}
\numberwithin{equation}{section}
\def\Q{{\mathbb {Q}}}
\def\Z{{\mathbb Z}}  
\def\R{{\mathbb R}} 
\def\C{{\mathbb C}} 
\def\eps{{\varepsilon}}
\def\undx{{\underline x}}
\def\undk{{\underline k}}
\def\undt{{\underline t}}
\def\undb{{\underline b}}
\def\undy{{\underline y}}
\def\cL{{\mathcal L}}
\def\beq{\begin{equation}}
\def\eeq{\end{equation}}
\def\Norm{{\rm Norm}} 
\def\Gal{{\rm Gal}}
\def\bfx{{\bf x}} 
\def\tih{{\widetilde h}} 
\def\rmi{{\rm i}}
\def\resp{{\it resp., }}
\def\otau{{\overline{\tau}}}
\begin{document}

\title[Effective approximation to complex algebraic numbers]{Effective approximation to complex algebraic numbers
by algebraic numbers of bounded degree}

\author{Prajeet Bajpai}
\address{Department of Mathematics, University of British Columbia, Vancouver, B.C., V6T 1Z2 Canada}
\email{prajeet@math.ubc.ca}

\author{Yann Bugeaud}
\address{I.R.M.A., UMR 7501, Universit\'e de Strasbourg
et CNRS, 7 rue Ren\'e Descartes, 67084 Strasbourg Cedex, France}
\address{Institut universitaire de France}
\email{bugeaud@math.unistra.fr}

\begin{abstract}
We establish the first effective improvements on the Liouville inequality 
for approximation to complex non-real algebraic numbers by complex 
algebraic numbers of degree at most $4$. 
\end{abstract}

\subjclass[2010]{11J68; 11D57, 11J86}
\keywords{Approximation to algebraic numbers, norm-form equations}

\maketitle

\section{Introduction} \label{intro}

Throughout this paper, the (na\"\i ve) height $H(\alpha )$ of an algebraic number $\alpha$
is the (na\"\i ve) height $H(P)$ of its minimal defining polynomial $P(X)$ over $\Z$, that is, the 
maximum of the absolute values of the coefficients of $P(X)$. 
Let $\xi$ be an algebraic real number of degree $d \ge 2$. 
Let $n$ be a positive integer. 
By a Liouville-type argument (see e.g. \cite{Gu67}), there exists a positive number $c_1(\xi, n)$ such that
\beq  \label{liouv}
|\xi - \alpha| > c_1 (\xi, n) H(\alpha)^{-d}, \quad \hbox{for every $\alpha \not= \xi$ algebraic of degree $\le n$}. 
\eeq
In the opposite direction, Wirsing \cite{Wir61} established in 1961 that there exists a positive number $c_2(\xi)$ such that
$$
|\xi - \alpha| < c_2 (\xi) H(\alpha)^{-d}, \quad 
\hbox{for i. m. $\alpha$ algebraic of degree $\le d-1$,}
$$
where i. m. stands for `infinitely many', 
see also \cite[Section 2.4]{BuLiv}. Thus, approximation to $\xi$ by algebraic numbers of degree at most $n$ is well 
understood when $n \ge d-1$. The case $n \le d-2$ is much more difficult. 

For $n=1$, \eqref{liouv} was considerably improved by Roth \cite{Ro55}, who showed that, for any $\eps > 0$,
there exists a positive number $c_3(\xi, \eps)$ such that
\beq  \label{roth}
|\xi - p/q| > c_3 (\xi, \eps) H(p/q)^{-2 - \eps}, \quad \hbox{for every rational number $p/q$}. 
\eeq
More generally, Schmidt \cite{Schm70,SchmLN} established that, for any $\eps > 0$ and any positive integer $n$ with $n \le d-2$,
there exists a positive number $c_4(\xi, \eps, n)$ such that
\beq  \label{schm}
|\xi - \alpha| > c_4 (\xi, \eps, n) H(\alpha)^{-n -1 - \eps}, \quad \hbox{for every $\alpha$ algebraic of degree $\le n$}. 
\eeq 
By a suitable transference argument, this implies that for any $\eps > 0$ and any positive integer $n$ with $n \le d-2$,
there exists a positive number $c_5(\xi, \eps, n)$ 
such that
\beq  \label{schmtrans}
|\xi - \alpha| < c_5 (\xi, \eps, n) H(\alpha)^{-n -1 + \eps}, \quad 
\hbox{for i. m. $\alpha$ algebraic of degree $\le n$.} 
\eeq 
A slight improvement has been established in \cite{Bu09}, where $\eps$ in \eqref{schmtrans} 
is replaced by a function of $H(\alpha)$ which tends to $0$ 
as $H(\alpha)$ tends to infinity. 

Unlike $c_1 (\xi, n)$, the positive numbers $c_3(\xi, \eps)$ and $c_4(\xi, \eps, n)$ are not effectively computable. 
Thus, there is a big gap between the effective result \eqref{liouv} and the ineffective results \eqref{roth} and \eqref{schm}. 
For $n=1$, that is, for rational approximation to algebraic numbers of degree $\ge 3$, this gap 
has been slightly reduced by Feldman \cite{Fe71} (see also \cite{Ba73}), 
as an application of the theory of linear forms in logarithms, first developed by Alan Baker \cite{Ba66}. 

\begin{theorem}[Feldman]  \label{feld}
For any real algebraic number $\xi$ of degree $d \ge 3$, 
there exist effectively computable positive numbers $\kappa(\xi)$ and $c_6(\xi)$ such that
$$
|\xi - p/q| > c_6 (\xi) H(p/q)^{ - d + \kappa (\xi) }, \quad \hbox{for every rational number $p/q$}. 
$$
\end{theorem}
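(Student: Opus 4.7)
The plan is to convert the rational-approximation problem into a Thue-equation question and then combine it with an effective polynomial bound for integer solutions of Thue inequalities coming from Baker's theory of linear forms in logarithms. Let $P(X) = a_d X^d + \cdots + a_0 \in \Z[X]$ be the minimal polynomial of $\xi$, with conjugates $\xi = \xi_1, \dots, \xi_d$, and associate to it the binary form $F(X, Y) := Y^d P(X/Y) = a_d \prod_{j=1}^{d}(X - \xi_j Y)$, irreducible of degree $d \geq 3$. For any rational $p/q$ in lowest terms with $p/q \neq \xi$, the integer $N := F(p, q)$ is nonzero; using $|p - \xi_j q| \leq |p - \xi q| + |\xi - \xi_j|\,q \leq c_0(\xi)\,q$ for $j \geq 2$ whenever $|\xi - p/q| \leq 1$, one obtains
$$
1 \leq |N| \leq c_1(\xi)\, q^{d}\, |\xi - p/q|.
$$
The Liouville bound \eqref{liouv} corresponds to retaining only the left inequality; the strategy now is to combine the right inequality with an effective upper bound on $q$ in terms of $|N|$.

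The required input is an effective Thue-type estimate in Feldman-refined polynomial form: for the form $F$ above, there exist effectively computable positive constants $C_2(\xi)$ and $\kappa_0(\xi)$ such that every integer solution $(x, y)$ of $|F(x, y)| \leq m$ satisfies
$$
\max(|x|, |y|) \leq C_2(\xi) \cdot m^{1/\kappa_0(\xi)}.
$$
One proves this by working in the ring of integers $\OO_K$ of $K := \Q(\xi)$: the algebraic integer $p - \xi_j q$ has norm dividing a controlled multiple of $N$, and so can be written as a unit in $\OO_K$ times an element of bounded height. Siegel's three-term identity relating $p - \xi_i q$, $p - \xi_j q$ and $p - \xi_k q$ then produces a small linear form in logarithms of a fundamental system of units of $\OO_K$ together with fixed algebraic numbers, to which Baker's effective lower bound for such forms applies. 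The positivity of $\kappa_0(\xi)$ is precisely the Feldman contribution that converts the exponential-in-$\log m$ bound of Baker's original method into one polynomial in $m$.

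To conclude, fix any $\kappa(\xi)$ with $0 < \kappa(\xi) < \kappa_0(\xi)$ and suppose that $|\xi - p/q| \leq H(p/q)^{-d + \kappa(\xi)}$ for some $p/q$ with sufficiently large $q$. Combining the two displays (and absorbing the harmless comparison between $H(p/q)$ and $q$ into the constants) yields
$$
q \leq C_2(\xi)\,\bigl(c_1(\xi)\,q^{\kappa(\xi)}\bigr)^{1/\kappa_0(\xi)} = C_3(\xi)\,q^{\kappa(\xi)/\kappa_0(\xi)},
$$
which is impossible once $q$ exceeds an effective threshold, since $\kappa(\xi)/\kappa_0(\xi) < 1$. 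For $q$ below that threshold, $|\xi - p/q|$ is trivially bounded below by an explicit positive quantity, and choosing $c_6(\xi) > 0$ small enough delivers the theorem. The main obstacle is the explicit form of the Thue bound: extracting a positive $\kappa_0(\xi)$ requires careful accounting of the regulator of $K$, of the heights of a fundamental system of units, and of the absolute effective constants in Baker's estimates for linear forms in logarithms, and it is this polynomial dependence on $m$—rather than merely an effective but exponential bound—that is the technical heart of the argument.
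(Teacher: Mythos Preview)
The paper does not give its own proof of this theorem: Feldman's result is quoted in the introduction with a reference to \cite{Fe71} (and to \cite{Ba73}), and the only further comment is the remark after Theorem~\ref{lflog} that the presence of $h_*(\alpha_{n+1})$ in the denominator of the logarithm is the key to obtaining a genuine positive $\kappa(\xi)$ rather than a function of $H(p/q)$ tending to zero. So there is nothing to compare against line by line.

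That said, your sketch is the standard route and is correct in outline. Reducing to a Thue inequality $|F(p,q)|\le m$, writing $p-\xi q$ as a unit times an element of bounded height, using the Siegel identity to produce a small linear form in logarithms of units, and then invoking an effective lower bound of Baker--Feldman type is exactly how the theorem is proved. Your identification of the crucial point --- that one needs a bound on $\max(|x|,|y|)$ which is \emph{polynomial} in $m$ rather than merely effective --- is precisely the content of the paper's remark about $h_*(\alpha_{n+1})$ in the denominator in Theorem~\ref{lflog}; this is what turns Baker's original effective-but-exponential estimate into Feldman's improvement. One small presentational point: rather than first fixing $\kappa(\xi)<\kappa_0(\xi)$ and arguing by contradiction, it is cleaner to feed the inequality $|N|\le c_1(\xi)q^d|\xi-p/q|$ directly into $q\le C_2(\xi)|N|^{1/\kappa_0(\xi)}$ and solve for $|\xi-p/q|$, which yields the exponent $-d+\kappa_0(\xi)$ immediately; but your argument as written is fine.
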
 

Bombieri \cite{Bo93} has given an alternative proof of Theorem \ref{feld}, independent of Baker's theory. 
Both methods yield a very small value for $\kappa (\xi)$. 
For some specific $\xi$, it is possible to improve Theorem \ref{feld}; see \cite[Section 4.10]{Bu18b} 
for references.  We do not know any examples of real algebraic numbers $\xi$ of degree $d \ge n+2$ 
for which \eqref{liouv} can be improved effectively, for any given $n\ge 2$.

Approximation to complex non-real algebraic numbers $\xi$ of degree $d \ge 2$ by 
complex algebraic numbers of degree at most $n$ was studied in 2009 by Bugeaud and Evertse \cite{BuEv09}. 
As in the real case, there exist positive numbers $c_7(\xi, n)$ and $c_8 (\xi)$ such that
\beq  \label{liouvC}
|\xi - \alpha| > c_7 (\xi, n) H(\alpha)^{- \frac{d}{2}},  \quad  \hbox{for every $\alpha \not= \xi$ algebraic of degree $\le n$},
\eeq
and
$$
|\xi - \alpha| < c_8 (\xi) H(\alpha)^{-\frac{d}{2}}, \quad 
\hbox{for i. m. $\alpha$ algebraic of degree $\le d-1$,}
$$
see e.g. \cite[Proposition 10.2]{BuEv09}. The more difficult case $n \le d-2$ is not fully understood. 
It has been shown in \cite{BuEv09} that there exists $w_n^* (\xi)$ in $\{(n-1)/2, n/2\}$ 
and, for every $\eps > 0$, there are positive numbers $c_9 (\xi, \eps, n)$ and $c_{10} (\xi, \eps, n)$
such that 
\begin{align*}
|&\xi - \alpha| > c_9 (\xi, \eps, n) H(\alpha)^{-w_n^* (\xi) - 1 - \eps}, \quad  \hbox{for every $\alpha$ algebraic of degree $\le n$}, \\
|&\xi - \alpha| < c_{10} (\xi, \eps, n) H(\alpha)^{-w_n^* (\xi) - 1 + \eps}, \quad 
\hbox{for i. m. $\alpha$ algebraic of degree $\le n$.}
\end{align*}
The exact value of $w_n^* (\xi)$ has been determined in \cite{BuEv09}, 
except when $n \ge 6$ is even, $n+2 < d \le 2n - 2$, $[\Q(\xi) : \Q(\xi) \cap \R] = 2$, and $1, \xi + \overline{\xi}, \xi \cdot \overline{\xi}$ are linearly independent over $\Q$.
Here and below, ${\overline {\ \cdot \ }}$ denotes complex conjugation.  In particular, we know that $w_n^* (\xi) = (n-1)/2$ for every complex non-real algebraic number $\xi$ and 
every odd integer $n$. Except \eqref{liouvC} all these results on approximation to complex non-real algebraic numbers are ineffective 
and, to the best of our knowledge,  there has been up to now no contribution to the 
following problem.

\begin{problem}   \label{pbC} 
To find examples of integers $n \ge 2$ and complex
non-real algebraic numbers $\xi$ of degree $d \ge n+2$ for which an effective improvement of \eqref{liouvC} holds.
\end{problem}

The main purpose of the present paper is to answer Problem \ref{pbC} for any integer $n$ in $\{2, 3, 4\}$, thereby 
giving the first examples of complex, non-real algebraic numbers $\xi$ of degree $d \ge n+2$ for which \eqref{liouvC} 
can be effectively improved.

\section{Results}

A totally complex (or totally imaginary) algebraic number is a complex algebraic number, none of whose 
Galois conjugates is real. 
A number field $K$ is a CM-field if it is a quadratic extension $K/F$ such that the image of every complex embedding of $F$ 
is contained in $\R$, but there is no complex embedding of $K$ whose image is contained in $\R$.   
We now state our main results. 

\begin{theorem}  \label{thw2} 
Let $\xi$ be a totally complex algebraic number of degree $d \ge 4$. 
If $d=4$, assume furthermore 
that $\Q(\xi)$ is not a CM-field. Then, there exist effectively computable positive real numbers $\kappa (\xi)$ and $c(\xi)$ such that 
$$
|\xi - \alpha| > c(\xi) H(\alpha)^{ - \frac{d}{2} + \kappa(\xi) }, \quad 
\hbox{for every $\alpha$ algebraic of degree $\le 2$.} 
$$
\end{theorem} 

Theorem \ref{thw2} improves the Liouville inequality \eqref{liouvC} 
for \emph{all} totally complex algebraic numbers for which it is possible to improve \eqref{liouvC}; see the 
beginning of subsection \ref{wellapp}.    

\begin{theorem}  \label{thw3} 
Let $\xi$ be a totally complex algebraic number of degree $d \ge 6$ such that the algebraic number field $\Q (\xi)$ is Galois. 
Then, there exist effectively computable positive real numbers $\kappa (\xi)$ and $c(\xi)$ such that 
$$
|\xi - \alpha| > c(\xi) H(\alpha)^{ - \frac{d}{2} + \kappa(\xi)}, \quad 
\hbox{for every $\alpha$ algebraic of degree $\le 3$.} 
$$
\end{theorem}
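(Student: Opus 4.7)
The plan is to reduce the problem of approximating $\xi$ by a complex cubic algebraic number to a norm-form identity in $K := \Q(\xi)$ and then to apply Baker's theorem on linear forms in logarithms to extract an effective improvement. We first dispose of the easy cases. If $\alpha$ is real, then $|\xi - \alpha| \ge |\Im \xi|$, a positive constant depending only on $\xi$; and if $\alpha$ has degree at most $2$, then Theorem~\ref{thw2} already applies, since $d \ge 6$ means no auxiliary hypothesis is needed. We may therefore assume $\alpha$ is complex non-real of degree exactly $3$, with primitive minimal polynomial $P(X) = a_3 X^3 + a_2 X^2 + a_1 X + a_0 \in \Z[X]$ whose roots are $\alpha$, $\overline{\alpha}$, and some $\alpha_3 \in \R$; after restricting to $|\xi - \alpha| < 1$, both $|\alpha|$ and $|\overline{\alpha}|$ are bounded in terms of $\xi$. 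The Galois hypothesis ensures that complex conjugation restricts to an automorphism of $K$ and that all conjugates of $\xi$ lie in $K$; since $\xi$ is totally complex, they split into $d/2$ complex-conjugate pairs $\{\xi_{2k-1}, \overline{\xi_{2k-1}}\}$ ($k = 1, \ldots, d/2$, with $\xi_1 = \xi$), and the fixed field $K^+ := K \cap \R$ has degree $d/2 \ge 3$ over $\Q$.

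Set $\mu_k := |P(\xi_{2k-1})|^2 > 0$, positive since $\deg P < d$. From $\overline{P(\xi_{2k-1})} = P(\overline{\xi_{2k-1}})$ together with $\prod_{j=1}^d P(\xi_j) = \mathrm{Res}(P, Q_\xi)/a_Q^3$, we obtain the norm-form identity
\[
\prod_{k=1}^{d/2} \mu_k \;=\; \prod_{j=1}^{d} P(\xi_j) \;=\; \frac{\mathrm{Res}(P, Q_\xi)}{a_Q^{3}} \;\ge\; a_Q^{-3},
\]
the last bound following since the resultant is a nonzero integer. Trivial estimates yield $\mu_1 \le c(\xi) H(\alpha)^2 |\xi - \alpha|^2$ and $\mu_k \le c(\xi) H(\alpha)^2$ for $k \ge 2$; together with the product bound, these recover exactly the Liouville inequality $|\xi - \alpha| \ge c(\xi) H(\alpha)^{-d/2}$. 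The task is therefore to sharpen the product $\prod_{k \ge 2}\mu_k$ by an effective factor $H(\alpha)^{-2\kappa'}$ with $\kappa' > 0$.

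The mechanism is as follows. The quantities $\alpha + \overline{\alpha}$ and $\alpha\overline{\alpha}$ approximate $\xi + \overline{\xi}$ and $\xi\overline{\xi}$ (both elements of $K^+$) to within $O(|\xi - \alpha|)$. The Vieta relations then force the real cubic parameter $\alpha_3 = -a_2/a_3 - (\alpha + \overline{\alpha})$ to approximate an explicit element of $K^+$, and expressing each $\mu_k$ through these approximate relations rewrites the ratios $\mu_k/\mu_1$ as products of algebraic numbers of $K$ whose logarithms form a nontrivial linear form in logarithms of fixed algebraic numbers of $K$, with integer coefficients built from $(a_0, a_1, a_2, a_3)$. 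Baker's theorem then delivers an effective lower bound on this linear form and hence an improvement of the trivial upper bound on $\mu_k/\mu_1$; translated back through the product identity, this yields $|\xi - \alpha| \ge c(\xi) H(\alpha)^{-d/2 + \kappa(\xi)}$ for an effectively computable $\kappa(\xi) > 0$. The principal obstacle is the rigorous construction of this linear form from the four free integer parameters of $P$ and the verification of its nondegeneracy; the Galois hypothesis is decisive in keeping the entire argument inside the single number field $K$ of fixed degree, so that Baker's constants depend only on $\xi$, but controlling the possibly large real root $\alpha_3$ (whose absolute value may reach $H(\alpha)/|a_3|$) is the chief technical subtlety.
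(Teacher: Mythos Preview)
Your proposal is not a proof but an outline whose decisive step is missing, and as described that step does not go through. You assert that ``the ratios $\mu_k/\mu_1$'' can be rewritten ``as products of algebraic numbers of $K$ whose logarithms form a nontrivial linear form in logarithms of fixed algebraic numbers of $K$, with integer coefficients built from $(a_0,a_1,a_2,a_3)$.'' But $\mu_k=P(\xi_{2k-1})\overline{P(\xi_{2k-1})}$ is a \emph{quadratic} polynomial in the four coefficients $a_i$; its logarithm is not a $\Z$-linear form in logarithms of fixed numbers, and the Vieta relations you invoke do not change this. Baker's Theorem~\ref{lflog} applies to quantities of the shape $\alpha_1^{b_1}\cdots\alpha_n^{b_n}\alpha_{n+1}-1$, and nothing in your sketch produces such an expression. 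You yourself flag this as ``the principal obstacle'' and leave it unresolved.

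The paper's proof shows what is actually required to bridge this gap. One writes $\bfx=P(\xi)=\mu u$ with $u$ a unit and $h(\mu)\ll\log M$; the integer exponents of $u$ in a fundamental system of units are \emph{not} the $a_i$, and relating the size of $\bfx$ to these exponents is the whole point. The $d-4$ linear relations among the conjugates $\sigma_i(\bfx)$, together with the Galois hypothesis, allow one to ``match'' complex-conjugate pairs: for every non-trivial $\tau\in\Gal(K/\Q)$ other than complex conjugation, $|\tau(\sigma_{d-1}(\bfx)/\sigma_d(\bfx))|$ is trapped between conjugates of comparable size, forcing $h(\sigma_{d-1}(\bfx)/\sigma_d(\bfx))\ll h(\mu)$. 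This halves the number of terms and produces a genuine three-term unit equation $a_j\sigma_j(\mu)\sigma_j(u)+\gamma_j\sigma_{d-2}(u)+\beta_j\sigma_d(u)=0$ with coefficients of height $\ll h(\mu)$, to which Theorem~\ref{uniteq} (itself a consequence of Baker's theorem) applies and yields $h(\sigma_d(u)/\sigma_j(u))\ll h_*(\mu)$. Your approximation heuristics for $\alpha+\overline{\alpha}$, $\alpha\overline{\alpha}$, $\alpha_3$ do not substitute for this unit-equation machinery, and the ``chief technical subtlety'' you identify (the large real root $\alpha_3$) is in fact a symptom that the direct route you propose does not close.
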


Theorem \ref{thw3} improves the Liouville inequality \eqref{liouvC} 
for all totally complex algebraic numbers generating a Galois extension.

\begin{theorem}    \label{thw4} 
Let $\xi$ be a totally complex algebraic number of degree $d \ge 8$  such that the algebraic number field $\Q (\xi)$ is Galois. 
Assume 
that $1, \sigma(\xi) + \overline{\sigma(\xi)}, \sigma(\xi) \cdot \overline{\sigma(\xi)}$ are linearly independent over $\Q$, for every 
Galois conjugate $\sigma(\xi)$ of $\xi$. Then, there exist effectively computable positive real numbers $\kappa (\xi)$ and $c(\xi)$ such that 
$$
|\xi - \alpha| > c(\xi) H(\alpha)^{ - \frac{d}{2} + \kappa(\xi)}, \quad 
\hbox{for every $\alpha$ algebraic of degree $\le 4$.} 
$$
\end{theorem}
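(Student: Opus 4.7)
The strategy parallels those of Theorems \ref{thw2} and \ref{thw3}: the approximation hypothesis is translated into a system of equations in the number field $K = \Q(\xi)$, and after elimination using the Galois structure and Vieta's formulas it reduces to a Thue--Mahler type equation in the maximal totally real subfield $K_0 = K \cap \R$. Solving this equation effectively via Baker's theorem on linear forms in logarithms then yields the desired improvement on Liouville's bound.

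First, one may assume $\alpha$ is non-real and of degree exactly $4$: if $\alpha$ is real then $|\xi - \alpha| \ge |\mathrm{Im}(\xi)| > 0$ is a constant, and if $\deg \alpha \in \{2, 3\}$ the conclusion already follows from Theorem \ref{thw2} or Theorem \ref{thw3}, whose hypotheses are inherited from the present one by taking $\sigma = \mathrm{id}$. Let $P_\alpha(X) \in \Z[X]$ be the primitive minimal polynomial of $\alpha$, of degree $4$ and height $H = H(\alpha)$, and factor it over $\R$ as
$$P_\alpha(X) = a(X^2 + p_1 X + q_1)(X^2 + p_2 X + q_2),$$
with $(X - \alpha)(X - \overline{\alpha}) = X^2 + p_1 X + q_1$; Vieta's formulas give four $\Q$-linear relations among $p_1, q_1, p_2, q_2$. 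Assuming $|\xi - \alpha| \le H^{-d/2}$ (otherwise nothing is to prove), the factor $\xi^2 + p_1 \xi + q_1 = (\xi - \alpha)(\xi - \overline{\alpha})$ is of order $|\xi - \alpha|$, while the second factor $\xi^2 + p_2 \xi + q_2$ and the values $P_\alpha(\sigma\xi)$ at the other Galois conjugates are all bounded polynomially in $H$.

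The key step is then to pair each Galois conjugate $\sigma\xi$ with $\overline{\sigma\xi}$, which is again a conjugate of $\xi$ since $K$ is Galois and totally complex; this produces $d/2$ conjugate pairs. The linear independence of $1, \sigma\xi + \overline{\sigma\xi}, \sigma\xi \cdot \overline{\sigma\xi}$ hypothesized for every such $\sigma$ ensures that the evaluation of the factored form of $P_\alpha$ at the pair $(\sigma\xi, \overline{\sigma\xi})$ provides two independent relations on $(p_1, q_1, p_2, q_2)$. Combining these evaluations over all $d/2$ pairs with the four Vieta relations, and eliminating the parameters $p_i, q_i$, one arrives at a Thue--Mahler equation in integers of $K_0$ with right-hand side controlled by $|P_\alpha(\xi)| \le c \cdot H \cdot |\xi - \alpha|$. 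Effective results on Thue--Mahler equations, resting ultimately on Baker's theorem (in a sharp form such as that of Matveev), bound the height of any solution effectively in terms of the right-hand side, which translates into the required inequality $|\xi - \alpha| > c(\xi) H^{-d/2 + \kappa(\xi)}$ with an effectively computable $\kappa(\xi) > 0$.

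The main obstacle is the elimination step and the guarantee of non-degeneracy. One must verify that, once the $d/2$ evaluation pairs are combined with the Vieta relations, the resulting equation does not acquire a spurious linear family of trivial solutions that would degrade the argument back to the Liouville bound. The hypothesis that $1, \sigma\xi + \overline{\sigma\xi}, \sigma\xi \cdot \overline{\sigma\xi}$ are linearly independent over $\Q$ for \emph{every} Galois conjugate $\sigma\xi$, rather than just for $\sigma = \mathrm{id}$, is precisely what prevents such collapse: any pair for which the independence failed would introduce a purely rational relation into the system, but strict independence at every conjugate forces the final equation into genuine Thue--Mahler form, of the type Baker's effective theory can solve.
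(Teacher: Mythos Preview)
Your proposal has a genuine gap at its core. The claim that ``Vieta's formulas give four $\Q$-linear relations among $p_1, q_1, p_2, q_2$'' is simply false: expanding $a(X^2+p_1X+q_1)(X^2+p_2X+q_2)$ and comparing with the coefficients of $P_\alpha$ yields one linear relation ($p_1+p_2$) and three \emph{quadratic} ones ($p_1p_2+q_1+q_2$, $p_1q_2+p_2q_1$, $q_1q_2$). Likewise, the evaluation $P_\alpha(\sigma\xi)$ is the product of the two quadratic factors at $\sigma\xi$, so the ``two independent relations on $(p_1,q_1,p_2,q_2)$'' you obtain from each conjugate pair are again nonlinear in these parameters. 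Consequently the promised ``elimination'' to a Thue--Mahler equation over $K_0$ is never carried out, and there is no mechanism in your sketch that produces one; this step is the whole content of the theorem, and it is missing.

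The paper's proof proceeds quite differently. It does not factor $P_\alpha$; instead it works with the norm form $\Norm_{K/\Q}(x_0+x_1\xi+\cdots+x_4\xi^4)$ and the $d-5$ linear relations satisfied by the Galois conjugates $\sigma_1(\bfx),\ldots,\sigma_d(\bfx)$. After row-reducing the coefficient matrix (any $d-5$ of whose columns have full rank), it uses a first application of Theorem~\ref{lflog} to show that the conjugates $\sigma_1(\bfx),\ldots,\sigma_{d-2}(\bfx)$ are of comparable size, which allows ``matching'' each unit with one other (complex conjugation, acting via $\Gal(K/\Q)$). This collapses the system from $d$ terms to $d/2$ and produces a three-term unit equation, solved effectively by Theorem~\ref{uniteq}. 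The linear independence hypothesis on $1,\sigma(\xi)+\overline{\sigma(\xi)},\sigma(\xi)\cdot\overline{\sigma(\xi)}$ is not used to set up an elimination as you suggest; rather, it enters only in the degenerate case of a vanishing subsum, where (via \cite[Lemma~6.1]{BuEv09}) it forces $\mathrm{rank}\bigl(L\cap(\mu u_0)^{-1}\mathfrak M\bigr)\le 2$ for the relevant real subfield $L$, and the remaining solutions are then bounded by a further unit-equation argument. Your account of what the hypothesis is for does not match this role.
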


We assume $d \ge 8$ in Theorem \ref{thw4} since, by the results of \cite{BuEv09}, the Liouville inequality \eqref{liouvC} 
with $n=4$    
is best possible for every totally complex 
algebraic number $\xi$ of degree $6$ such that the field $\Q (\xi)$ is Galois. 
We believe that the linear independence assumption in Theorem \ref{thw4} is satisfied 
for a generic totally complex algebraic number which generates a Galois extension; see subsection \ref{testing}.

For the proof of Theorem \ref{thw2} (\resp Theorem \ref{thw3}, Theorem \ref{thw4}), we study norm-form equations 
and establish (except when $n=4$ and $d=8$, see below) that there are effectively computable positive numbers $H_0$ and $\delta$ such that
$$
|\Norm_{K / \Q} (P(\xi))| \ge H(P)^\delta,
$$
for every integer polynomial $P(X)$ of degree at most $2$ (\resp at most $3$, at most $4$) of height greater that $H_0$. 
Then, we apply the (easy) Lemma \ref{transf}. By this method we obtain an effective improvement for 
{\it every} Galois conjugate of $\xi$. Thus it is perhaps unsurprising that our theorems apply only to totally 
complex algebraic numbers. 
Furthermore, this explains why the assumption in Theorem \ref{thw4} is about every Galois conjugate of $\xi$.

In Theorems \ref{thw2} and \ref{thw3}, the positive real number $\kappa$ takes the form 
$$
\bigl( (c_1 d)^{c_2 d} R_K  \log \max\{3, R_K\}  \bigr)^{-1},
$$ 
where $R_K$ is the regulator of the number field $\Q(\xi)$ and $c_1, c_2$ are (large) real numbers, independent of $\xi$. 
In Theorem \ref{thw4}, it takes the form 
$$
\bigl( (c_1 d)^{c_2 d} R_K^2  \log \max\{3, R_K\}   \bigr)^{-1},      
$$
since we have two applications of Theorem \ref{lflog}. 

The proofs of Theorems \ref{thw2} to \ref{thw4} are based on the recent results of Bajpai \cite{Baj23} 
on the effective resolution of norm-form equations in three to five variables, which considerably 
extend earlier results of Vojta \cite{Vo83}. Except when $n=4$ and $d=8$, we prove a stronger version of these results, suitable for our intended application. 
In addition, our treatment of norm-form equations in five variables is completely new; 
indeed we prove the following result which may be  
of independent interest.

\begin{theorem}    \label{thnf4} 
Let $\xi$ be a totally complex algebraic number of degree $d \ge 10$  such that the algebraic number field $K = \Q (\xi)$ is Galois. 
Assume 
that $1, \sigma(\xi) + \overline{\sigma(\xi)}, \sigma(\xi) \cdot \overline{\sigma(\xi)}$ are linearly independent over $\Q$, for every 
Galois conjugate $\sigma(\xi)$ of $\xi$. Then, there exist effectively computable positive real numbers $\kappa (\xi)$ and $c(\xi)$ such that 
$$
| \Norm_{K/\Q}(x_0 + x_1\xi + x_2\xi^2 + x_3\xi^3 + x_4\xi^4 ) | \; > \; c(\xi) X^{ \kappa(\xi)},
$$
where $X = \max\{ |x_0|, \ldots, |x_4| \}$.
\end{theorem}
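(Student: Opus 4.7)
The plan is to reduce this five-variable norm-form problem over $K$ to a lower-dimensional norm-form problem over the maximal totally real subfield $K^+$, where Bajpai's effective results \cite{Baj23} can be brought to bear.

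Since $K$ is Galois and $\xi$ is totally complex, complex conjugation restricts to an order-$2$ element $\tau\in\Gal(K/\Q)$; its fixed field $K^+:=K^\tau$ satisfies $[K:K^+]=2$, and $K^+$ is totally real. Set $u:=\xi+\overline{\xi}$ and $v:=\xi\overline{\xi}$, so that $u,v\in K^+$ and $\xi^2=u\xi-v$. Reducing the higher powers of $\xi$ modulo this quadratic relation, one writes
$$
\beta \;:=\; x_0+x_1\xi+x_2\xi^2+x_3\xi^3+x_4\xi^4 \;=\; A+B\xi,
$$
where $A,B\in K^+$ are explicit $\Q$-linear combinations of $x_0,\ldots,x_4$ with coefficients in $\Z[u,v]$. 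Since $\overline{\beta}=A+B\overline{\xi}$, the key factorisation
$$
\Norm_{K/\Q}(\beta) \;=\; \Norm_{K^+/\Q}(\beta\,\overline{\beta}) \;=\; \Norm_{K^+/\Q}\bigl(A^2+ABu+B^2v\bigr)
$$
reduces the problem to a lower bound for $|\Norm_{K^+/\Q}(N)|$, where $N:=A^2+ABu+B^2v\in K^+$.

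Next, the pair $(A,B)$ ranges over a $5$-dimensional $\Q$-sublattice of $K^+\oplus K^+$, with $\max\{H(A),H(B)\}\asymp X$ because the map $(x_0,\ldots,x_4)\mapsto(A,B)$ is linear with bounded-height coefficients depending only on $\xi$. Writing
$$
\Norm_{K^+/\Q}(N) \;=\; \prod_{\sigma\in\Gal(K^+/\Q)}\bigl(\sigma(A)^2+\sigma(A)\sigma(B)\sigma(u)+\sigma(B)^2\sigma(v)\bigr),
$$
the hypothesis that $1,\sigma(\xi)+\overline{\sigma(\xi)},\sigma(\xi)\overline{\sigma(\xi)}$ are $\Q$-linearly independent for every Galois conjugate $\sigma(\xi)$ ensures that each factor is a nondegenerate positive definite quadratic form in $(\sigma(A),\sigma(B))$ over $\R$, and in particular cannot vanish for nonzero $(A,B)$. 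Assuming for contradiction $|\Norm_{K^+/\Q}(N)|<X^\kappa$ for $\kappa$ very small, a pigeonhole argument combined with Dirichlet's unit theorem produces a unit $\epsilon\in\OO_{K^+}^\times$ of regulator-controlled size such that the conjugates of $\epsilon N$ are all comparable in absolute value; expanding $\epsilon N$ in an integer basis of $K^+$ yields a system of $S$-unit equations, and one concludes by applying Theorem \ref{lflog} to bound the resulting linear form in logarithms effectively from below, producing the required $\kappa>0$.

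The main obstacle is that $(A,B)$ is not free in $(K^+)^2$ but is constrained to a specific $5$-dimensional $\Q$-sublattice, while $N=A^2+ABu+B^2v$ simultaneously carries the Veronese-type structure $(A^2)(B^2)=(AB)^2$. In contrast to the three- and four-variable cases of \cite{Baj23}, neither the ternary norm form $P+Qu+Rv$ over $K^+$ nor the relative binary norm form $\Norm_{K/K^+}(A+B\xi)$ is directly amenable on its own, so the proof must couple them. The fact that the linear independence hypothesis is imposed on every Galois conjugate $\sigma(\xi)$ (rather than on $\xi$ alone) is exactly what permits this coupled argument to proceed uniformly over all archimedean places; the two applications of Theorem \ref{lflog} that result account for the shape $R_K^2(\log\max\{3,R_K\})^3$ quoted for $\kappa^{-1}$ after Theorem \ref{thw4}.
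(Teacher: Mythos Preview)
Your reduction rests on the claim that $K^+:=K^\tau$ is totally real, but this holds only when complex conjugation $\tau$ is \emph{central} in $\Gal(K/\Q)$, i.e.\ only when $K$ is a CM field. For a general totally complex Galois $K$, complex conjugation is well-defined only up to conjugacy, and $K^\tau$ typically has non-real embeddings; the paper flags exactly this point in its discussion of matching (``if $K$ is a CM-field, then there is nothing to do\ldots\ However, in the general case, complex conjugation has no reason to commute with a Galois embedding''). At every non-real place of $K^+$ your positive-definiteness claim for the factors of $\Norm_{K^+/\Q}(N)$ fails, and the argument breaks.

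Even in the CM case the proposal is not a proof. The identity $\Norm_{K/\Q}(\beta)=\Norm_{K^+/\Q}(\beta\overline{\beta})$ simply regroups the product $\prod_\rho|\rho(\beta)|$ into conjugate pairs; nothing effective has been gained. The actual content --- your ``pigeonhole argument combined with Dirichlet's unit theorem,'' the unspecified ``system of $S$-unit equations,'' and above all the ``coupling'' of the ternary form $P+Qu+Rv$ with the Veronese constraint $PR=Q^2$ --- is asserted but never carried out. Your final paragraph correctly diagnoses that neither reduction is tractable on its own and that a coupling is needed, and then stops; that coupling \emph{is} the theorem.

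The paper's argument is structurally different. It works directly with the $d-5$ linear relations among $\sigma_1(\mathbf x),\ldots,\sigma_d(\mathbf x)$, row-reduces, and applies Theorem~\ref{lflog} once to show that the conjugates $\sigma_1(\mathbf x),\ldots,\sigma_{d-2}(\mathbf x)$ are all comparable up to a factor controlled by $h(\mu)\log_*(h(\mathbf x)/h_*(\mu))$. This allows every conjugate to be ``matched'' with its complex-conjugate partner (with coefficient growth $\ll\Omega$), cutting the system from $d$ terms to $d/2$; a further row-reduction then yields a three-term unit equation to which Theorem~\ref{uniteq} applies. The linear-independence hypothesis on every $\sigma(\xi)$ enters only when a subsum vanishes: via Lemma~6.1 of \cite{BuEv09} it forces $\mathrm{rank}\bigl(L\cap(\mu u_0)^{-1}\mathfrak M\bigr)\le 2$ for any subfield $L\subset K$ with a real embedding, and the residual two-variable problem over $L$ is then dispatched directly.
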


In particular, for $\xi$ satisfying the conditions of the theorem, we can effectively solve the norm-form equation
\[
\Norm_{K/\Q}(x_0 + x_1\xi + x_2\xi^2 + x_3\xi^3 + x_4\xi^4 ) \; = \; m
\]
for any fixed integer $m$.

Quite surprisingly, Theorem \ref{thnf4} is no longer true for $d=8$. 
We give below an explicit counter-example. 
Let $\xi$ be a root of the polynomial $x^8 + 7x^4 + 1$. 
Then, the field $\Q(\xi)$ is Galois, CM, and totally complex. Furthermore, 
we check that $\xi$ satisfies the linear independence criteria of Theorem \ref{thnf4}. 
Now, $\xi^4$ is quadratic with minimal polynomial $x^2 + 7x + 1$ and it is easy to see that any integral power of $\xi^4$ 
is of the form $x_0 + x_4  \xi^4$, where $x_0$ and $x_4$ are integers. Consequently, 
we get infinitely many (quadratic) solutions to the norm form equation
$$
| \Norm_{\Q(\xi) /\Q}(x_0 + x_1\xi + x_2\xi^2 + x_3\xi^3 + x_4\xi^4 ) | = 1. 
$$
However,  these solutions to the norm-form equation do not give rise to very good approximations to $\xi$, hence there is no contradiction 
with Theorem \ref{thw4}.

The rest of the paper is organized as follows. We comment on our results in the next section 
and justify the claims made below our first two theorems. 
Some auxiliary results are gathered in Section \ref{auxres}, while Section \ref{proofs} contains the proofs of 
Theorems \ref{thw2} to \ref{thnf4}.

\section{Discussion}

\subsection{Unusually well-approximated numbers}   \label{wellapp}   
Let $r, s$ be positive integers and assume that $r$ is not a perfect square. 
Let $a, b$ be positive integers such that 
\beq \label{pell}
a^2 - r b^2 = 1, 
\eeq
and set
$$
P_{a, b} (X) = b X^2 - 2 a X + (r + s) b.
$$
Since
$$
| P_{a,b} (\sqrt{r} + \rmi \sqrt{s} ) | \le \frac{2 \sqrt{r+s}}{a + b \sqrt{r}} \le \frac{2s (r+s)}{H(P_{a,b})},
$$
the polynomial $P_{a, b} (X)$ has a root $\alpha_{a, b}$ such that 
$$
| (\sqrt{r} + \rmi \sqrt{s} ) - \alpha_{a,b} | \le \frac{2 (r+s)^2}{H(\alpha_{a,b})^2}.
$$
Since \eqref{pell} has infinitely many integer solutions, we conclude that the Liouville inequality
\eqref{liouvC} is best possible (up to the numerical constant) for the quartic  
number $\sqrt{r} + \rmi \sqrt{s}$ and $n=2$. More generally, 
\cite[Corollary 2.4]{BuEv09} states that, when $n=2$,  \eqref{liouvC} 
is best possible for the complex non-real algebraic number $\xi$ of degree $\ge 4$ if and only if $\xi$ has degree $4$ and 
$1, \xi + \overline{\xi}, \xi \cdot \overline{\xi}$ are linearly dependent over $\Q$. 
As pointed out to us by the referee, the latter condition is equivalent to 
saying that $\Q(\xi)$ is a quartic CM-field;  see Lemma \ref{CM} below.   
Thus Theorem \ref{thw2} improves the Liouville inequality 
for all totally complex algebraic numbers where it is possible to improve \eqref{liouvC} with $n=2$.

For the case $n=4$, again there are situations where \eqref{liouvC} is best possible.
Indeed, by \cite[Corollary 2.4 and Proposition 10.3]{BuEv09}, when $n=4$, the Liouville inequality \eqref{liouvC} 
is best possible (up to the numerical constant) for the complex non-real algebraic number $\xi$ of degree $d \ge 6$ 
if and only if $\xi$ has degree $6$ and $[\Q(\xi) : \Q(\xi) \cap \R]  = 2$ or $\xi$ has degree $6$ and 
$1, \xi + \overline{\xi}, \xi \cdot \overline{\xi}$ are linearly dependent over $\Q$. 
If the totally complex algebraic number field $\Q(\xi)$ is Galois and $d=6$, 
then $\Q(\xi)$ has a subfield of degree $3$ fixed by complex conjugation, hence real. We then have 
$[\Q(\xi) : \Q(\xi)\cap \R] = 2$ and know that \eqref{liouvC} cannot be improved in this case. 

As a practical example, consider the polynomial
$$
f(X) = X^6 - 3 X^5 + 5 X^4 - 5 X^3 + 5 X^2 - 3 X + 1.
$$
If $\xi$ is a root of $f$, then $K = \Q(\xi)$ is a totally complex Galois sextic field (LMFDB label 6.0.12167.1).  
We can label the roots of $f$ as 
$\xi_1, \overline{\xi_1}, \xi_2, \overline{\xi_2}, \xi_3, \overline{\xi_3}$ in such a way that 
$\xi_1 \cdot \overline{\xi_1} = 1$, $\xi_2 + \overline{\xi_2} = 1$, and 
$\xi_3 + \overline{\xi_3} = \xi_3 \cdot \overline{\xi_3}$. 
This shows that $1, \xi + \overline{\xi}, \xi \cdot \overline{\xi}$ are linearly dependent over $\Q$, for any root $\xi$ of $f$, 
thus the Liouville inequality is best possible for all roots of $f$. 
We note it is possible to completely solve the norm-form equation
$$
\Norm_{K / \Q} (x_0 + x_1 \xi + \ldots + x_4 \xi^4) = \pm 1,
$$
using the methods of \cite{Baj23}. Indeed, the equation has $3$ infinite families of solutions and $120$ exceptional solutions. 
The computational details of this and other similar examples will be discussed more thoroughly in \cite{Bajcomp23}.

\subsection{Comparing $\xi$ with its Galois conjugates}   \label{galconj}
By the method used to prove Theorems \ref{thw2} to \ref{thw4}, if we manage to improve 
\eqref{liouvC} in an effective way for a totally complex algebraic number $\xi$ and some $n \ge 2$, then we have a similar 
improvement for each of the Galois conjugates of $\xi$. This is clear from Lemma \ref{transf}. 
However, the results of \cite{BuEv09} show that the value of $w_n^* (\xi)$, which is equal either to $(n-1)/2$ or to $n/2$, depends 
strongly on whether
$$
1, \xi + \overline{\xi}, \xi \cdot \overline{\xi}
$$
are linearly dependent over $\Q$ or not. This motivates the following question. 

\begin{problem}     \label{wnconj} 
Let $n \ge 2$ be an even integer. 
Do there exist complex non-real algebraic numbers $\xi$ having a Galois conjugate $\sigma(\xi)$ such that 
$$
w_n^* (\xi) = \frac{n-1}{2} \quad \hbox{and} \quad w_n^*(\sigma(\xi)) = \frac{n}{2}  \, ? 
$$
\end{problem}

The answer is positive for $n=2$ and $n=4$. Indeed, consider the polynomial 
$$
g(X) = X^6 - X^5  + 2 X^3 -  X + 1, 
$$
whose LMFDB \cite{LMFDB} label is 6.0.27556.1. Its Galois closure has degree $48$. 
We label the roots of $g$ as 
$\xi_1, \overline{\xi_1}, \xi_2, \overline{\xi_2}, \xi_3, \overline{\xi_3}$ in such a way that 
$\xi_1 \cdot \overline{\xi_1} = 1$. 
We check that $1, \xi_2 + \overline{\xi_2}, \xi_2 \cdot \overline{\xi_2}$ and 
$1, \xi_3 + \overline{\xi_3}, \xi_3 \cdot \overline{\xi_3}$ are linearly independent over $\Q$. 
Furthermore, we check that the number fields $\Q(\xi_2)$ and $\Q(\xi_3)$ do not have a non-trivial real subfield.
Consequently, it follows from \cite[Corollary 2.4]{BuEv09} that 
$$
w_2^* (\xi_1) = 1 \quad \hbox{and} \quad w_2^* (\xi_2) = w_2^* (\xi_3) = \frac{1}{2}, 
$$
and
$$
w_4^* (\xi_1) = 2 \quad \hbox{and} \quad w_4^* (\xi_2) = w_4^* (\xi_3) = \frac{3}{2}.
$$
This means that the method developed in the present paper 
cannot be used to get an effective improvement of \eqref{liouvC} 
for $\xi_2$, nor for $\xi_3$.  While we do not investigate Problem \ref{wnconj} further, we believe that it has a positive answer 
for every even $n \ge 6$.

\subsection{Testing the linear independence criterion}  \label{testing}
In Theorems \ref{thw4} and \ref{thnf4} we consider algebraic numbers $\xi$ such that, 
for all Galois conjugates $\sigma(\xi)$ of $\xi$, the numbers
\[
1,  \sigma(\xi)+\overline{\sigma(\xi)},  \sigma(\xi)\cdot\overline{\sigma(\xi)}
\]
are $\Q$-linearly independent. For a given algebraic $\xi$, this criterion can be checked directly 
by computing the action of the Galois group of the minimal polynomial of $\xi$. 
Clearly linear independence does not always hold---for example, it fails if $\xi$ has a Galois conjugate on the unit circle. 
However, one can ask whether this linear independence is expected for a `random' algebraic number; 
we present some numerical evidence in support of this expectation.

For small values of $d$, one can check the LMFDB \cite{LMFDB} for a list of totally complex Galois number fields 
of degree $d$. 
Each field is presented in the form $K = \Q[X]/(f)$. Letting $\gamma$ be a root of this listed $f$, 
one can generate elements in $K$ by taking `random' linear combinations of the powers $1, \gamma, \ldots , \gamma^{d-1}$ of $\gamma$. More specifically, for each $i=0, 1, \ldots , d-1$, one can randomly sample $c_i$ from some fixed interval 
$[-R,R]$ and consider the elements 
\[
\xi = c_0 + c_1\gamma + c_2\gamma^2 + \cdots + c_{d-1}\gamma^{d-1} \; \in \; K.
\]
One can then check how often such an element satisfies the desired linear independence criterion for all its Galois conjugates.

We carried out this process for $d$ in $\{8,10,12,14,16\}$, testing the first 100 fields in each degree 
(ordered by absolute discriminant) and 100 `random' elements as above 
(with coefficients $c_i$ in $[-10,10]$) for each field. Of the 50,000 algebraic numbers thus sampled, 
only one failed to satisfy the conditions of Theorems \ref{thw4} and \ref{thnf4}. 
In particular, the conclusions of Theorems \ref{thw4} and \ref{thnf4} hold for $49,999$ 
of the `randomly generated' algebraic numbers. Finally, we note that it is also possible 
to describe infinite families of $\xi$ of arbitrarily large degree satisfying these conditions. For example, if $d_1,\ldots,d_k$ are co-prime positive squarefree integers and $k\ge 3$, then
\[
\xi \; = \; \sqrt{d_1} + \sqrt{d_2} + \cdots + \sqrt{d_{k-1}} + \sqrt{-d_k}
\]
satisfies the necessary linear independence criterion, and $\Q(\xi)$ is totally complex and Galois.

\subsection{Exponents of Diophantine approximation}  
At the end of Section \ref{intro}, we have defined $w_n^* (\xi)$ 
in accordance with the exponents of Diophantine approximation $w_n^*$ introduced in 1939 
by Koksma \cite{Ko39}, which complement the exponents $w_n$ defined in 1932 by Mahler \cite{Mah32}. 
Let us recall their definitions. Let $n$ be a positive integer and $\theta$ a complex number. 
We let $w_n (\theta)$ be the supremum of the real numbers $w$ for which the inequality 
$$
0 < |P(\theta)| \le H(P)^{-w}
$$
is satisfied by infinitely many integer polynomials $P(X)$ of degree at most $n$. 
We let $w_n^* (\theta)$ be the supremum of the real numbers $w^*$ for which the inequality 
$$
0 < |\theta - \alpha| \le H(\alpha)^{-w^* - 1}
$$
is satisfied by infinitely many algebraic numbers $\alpha$ of degree at most $n$. The reader is directed 
to \cite{BuLiv} for an overview of the known results on $w_n$ and $w_n^*$, including a proof that 
$w_n^* (\theta) \le w_n (\theta)$ always holds.

Let $\xi$ be a complex non-real algebraic number of degree $d$. 
A Liouville-type argument shows that
$$
w_n^* (\xi) \le w_n (\xi) \le \frac{d}{2} - 1.     
$$
Under the assumptions of Theorems \ref{thw2} to \ref{thw4}, we have obtained an effective improvement of the form 
$w_n^* (\xi) \le \frac{d}{2}-1 - \kappa$, for some positive $\kappa$. It should be pointed out that our proof actually gives 
$w_n (\xi) \le \frac{d}{2}-1 - \kappa$, which is a slightly more precise upper bound.

\section{Auxiliary results}   \label{auxres}

Throughout this paper, we let $h$ denote the logarithmic Weil height and we set 
$h_* ( \cdot ) = \max\{h (\cdot ), 1\}$ and $\log_* (\cdot ) = \max\{ \log (\cdot ), 1 \}$. 
It is well-known that the (na\"\i ve) height and the Weil height are comparable, namely, for 
any complex algebraic number $\alpha$ of degree $d$, we have
$$
- \frac{1}{2} \log (d+1) + d h(\alpha) \le   \log H(\alpha) \le d ( h(\alpha) + \log 2),
$$
see e.g. \cite[Chapter 3]{WaLiv}. 
We recall first a classical estimate for linear forms in complex logarithms of algebraic numbers.

\begin{theorem} \label{lflog} 
Let $n \ge 1$ be an integer. 
Let $\alpha_1, \ldots, \alpha_n, \alpha_{n+1}$ be non-zero algebraic numbers.  Let $D$ denote the degree of the algebraic number field generated by $\alpha_1, \ldots, \alpha_{n+1}$ over $\Q$. Let $b_1, \ldots, b_n$ be non-zero integers and set
$$
B = \max\{|b_1|, \ldots, |b_n|\}. 
$$
If $\alpha_1^{b_1} \ldots \alpha_n^{b_n} \alpha_{n+1}  \not= 1$, then we have
$$
\log |\alpha_1^{b_1} \ldots \alpha_n^{b_n} \alpha_{n+1} - 1|  
> - c(n, D) \, h_* (\alpha_1) \cdots h_*(\alpha_{n+1}) \, \log_* \Bigl( \frac{B}{h_* (\alpha_{n+1})} \Bigr). 
$$
\end{theorem}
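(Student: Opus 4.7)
The plan is to suppose $\mathbf{x} = (x_0, \ldots, x_4) \in \Z^5$ satisfies $|\Norm_{K/\Q}(L(\mathbf{x}))|$ small compared to a power of $X := \max_i |x_i|$, and derive an effective upper bound on $X$ via (two) applications of linear forms in complex logarithms (Theorem \ref{lflog}). Rearranging then yields the asserted lower bound on the norm.

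First I would exploit the complex-conjugate pairing of embeddings. Let $c \in \Gal(K/\Q)$ denote the (pulled-back) complex conjugation; since $K$ is totally complex Galois, $c$ has order $2$, and $\sigma \mapsto c\sigma$ is a fixed-point-free involution on $\Gal(K/\Q)$ that pairs the $d \ge 8$ embeddings into $d/2$ complex-conjugate pairs. For $\mathbf{x} \in \Z^5$ this gives $\Norm_{K/\Q}(L(\mathbf{x})) = \prod_{\text{pairs}} |L_\sigma(\mathbf{x})|^2$. For any five distinct embeddings $\sigma_1, \ldots, \sigma_5$ the associated Vandermonde determinant in $\sigma_j(\xi)$ is non-zero, so Cramer's rule expresses each $x_k$ as a fixed $\overline{\Q}$-linear combination of $L_{\sigma_1}(\mathbf{x}), \ldots, L_{\sigma_5}(\mathbf{x})$; this yields $X \ll_\xi \max_j |L_{\sigma_j}(\mathbf{x})|$ for any such choice of five.

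Next, set $\alpha = L(\mathbf{x}) \in \OO_K$. Since $|\Norm(\alpha)|$ is bounded, the ideal $\alpha\OO_K$ lies in an effectively finite list, so we may write $\alpha = \mu \cdot u$ with $\mu$ in a finite effectively computable set and $u \in \OO_K^{\times}$. Expressing $u = \eps_1^{b_1} \cdots \eps_r^{b_r}$ in terms of fundamental units, the trivial bound $|\sigma(\alpha)| \ll_\xi X$ together with regulator estimates forces $\max_j |b_j| \ll R_K \log_* X$. Because $d \ge 8$, there are at least three embeddings $\tau$ outside any chosen five; for each such $\tau$ the Vandermonde identity $L_\tau(\mathbf{x}) = \sum_{j=1}^5 a_{\tau, j} L_{\sigma_j}(\mathbf{x})$ becomes a vanishing $\overline{\Q}$-linear relation in the multiplicative expressions $\sigma_j(\mu) \prod_k \sigma_j(\eps_k)^{b_k}$. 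Rearranging so that one term equals $1$ and applying Theorem \ref{lflog} produces a lower bound of the form $\exp(-C(\xi) R_K \log_* R_K \cdot \log_* X)$ on the distance of the resulting multiplicative combination from $1$. A second application of Theorem \ref{lflog}, using another extra embedding and the sharpening from the first step, controls the remaining exponents (this is why the final $\kappa$ in Theorem \ref{thw4} contains the factor $R_K^2$). Matching these lower bounds against the upper bound coming from the smallness of the chosen $|L_{\sigma_j}|$---ultimately polynomial in $X^{-\kappa}$---yields the desired effective bound on $X$ provided $\kappa$ is sufficiently small.

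The principal obstacle is non-degeneracy. Theorem \ref{lflog} requires the isolated multiplicative combination to be distinct from $1$, and more generally the passage from $L_\tau = \sum_j a_{\tau, j} L_{\sigma_j}$ to a usable $S$-unit equation must avoid hidden cancellations in proper subsums. Such a cancellation would correspond to an algebraic relation between the pair-products $L_\sigma L_{c\sigma} = |L_\sigma|^2$ and the remaining $L_\tau$, amounting to a dimension drop in the $\Q$-span of $1,\ \sigma(\xi) + \overline{\sigma(\xi)},\ \sigma(\xi) \cdot \overline{\sigma(\xi)}$ for some conjugate $\sigma(\xi)$. The hypothesis of $\Q$-linear independence of these three quantities at \emph{every} Galois conjugate is precisely what simultaneously rules out such degeneracies at all pairs of conjugate embeddings used in the Vandermonde inversion, and is therefore the decisive ingredient permitting the five-variable extension beyond Bajpai's three- and four-variable norm-form results \cite{Baj23}.
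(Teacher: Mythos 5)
Your proposal does not address the stated theorem at all. Theorem \ref{lflog} is a classical lower bound for linear forms in complex logarithms of algebraic numbers, a result from Baker's theory; the paper does not prove it but simply cites standard references (Bugeaud's \emph{Linear forms in logarithms and applications} and Waldschmidt's \emph{Diophantine Approximation on Linear Algebraic Groups}). The distinctive feature of this particular formulation --- the appearance of $h_*(\alpha_{n+1})$ in the denominator inside the $\log_*$ --- is precisely what makes the Feldman-type effective improvements possible, and establishing it requires the full machinery of transcendence theory (interpolation determinants or auxiliary functions with extrapolation), none of which appears in your argument.

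What you have written is instead a sketch of the proof of Theorem \ref{thnf4} (equivalently Theorem \ref{thw4}), the five-variable norm-form bound: you decompose $L(\mathbf{x}) = \mu u$, invert the Vandermonde system, set up unit equations from the extra embeddings, invoke Theorem \ref{lflog} twice to control exponents, and appeal to the linear-independence hypothesis on $1$, $\sigma(\xi)+\overline{\sigma(\xi)}$, $\sigma(\xi)\cdot\overline{\sigma(\xi)}$ to rule out degenerate vanishing subsums. That is a reasonable outline of the argument in Section 5.4 of the paper, and the remark about two applications of Theorem \ref{lflog} producing the $R_K^2 (\log_* R_K)^3$ factor in $\kappa$ is consistent with the authors' discussion. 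But it is circular as an attempted proof of Theorem \ref{lflog} itself, since it takes that theorem as given. You need to either cite the theorem as the paper does, or reproduce an actual proof from the linear-forms-in-logarithms literature.
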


\begin{proof}
See \cite{Bu18b} or \cite{WaLiv}. 
\end{proof}

A crucial point in Theorem \ref{lflog} is the presence of $h_*(\alpha_{n+1})$ 
in the denominator in the logarithm. This (apparently small) improvement 
is the key for the proof of Theorem \ref{feld} (otherwise, the positive $\kappa (\xi)$
has to be replaced by a function of $H(p/q)$ tending to zero as $H(p/q)$ tends to infinity) and 
of our results; see the survey \cite{Bu23} for more explanations.

We will use the following consequence of Theorem \ref{lflog}. 

\begin{theorem} \label{uniteq}  
Let $\alpha$ be a nonzero element of an algebraic number field $K$. 
Let $v$ be a unit in $K$ with $\alpha v \not= 1$. 
Then, there is an effectively computable constant $C_1 = C_1 (K)$ such that 
$$
\log |1 - \alpha v| \ge - C_1 h_* (\alpha) \log_*  \frac{h_* (v)}{h_* (\alpha)}.
$$
\end{theorem}

\begin{proof}
We use Dirichlet's unit theorem to express $v$ as a product of a root of unity in $K$ times 
integral powers of elements of a fundamental system of units in $K$. 
The theorem then follows from Theorem \ref{lflog}. 
\end{proof}

\begin{theorem} \label{threeuniteq} 
Let $K$ be an algebraic number field of degree $d$ and discriminant $D_K$.
Let $\alpha$, $\beta $ be non-zero elements of $K$.
All solutions $x, y$ in the group of units of $K$ 
to the unit equation
$$
\alpha x + \beta y = 1    
$$
satisfy
\beq \label{bounduniteq}
{\rm max}\{h(x), h(y)\} \ll_{d, D_K}  {\rm max} \{h_* (\alpha ), h_* (\beta )\}. 
\eeq
\end{theorem} 

\begin{proof}
See \cite{Bu18b} or \cite{EvGy15}. The fact that the upper bound in \eqref{bounduniteq} 
is linear in ${\rm max} \{h_* (\alpha ), h_* (\beta )\}$ is also a consequence of the presence of $h_*(\alpha_{n+1})$ 
in the denominator in the logarithm in Theorem \ref{lflog}.
\end{proof}

The next (easy) lemma shows how a lower bound for the norm of $|P(\xi)|$, where $P(X)$ runs through 
the set of integer polynomials of degree at most $n$, 
implies a lower bound for the distance from $\xi$ to algebraic numbers of degree at most $n$. 

\begin{lemma}  \label{transf} 
Let $\xi$ be a complex non-real algebraic number of degree $d$. 
Let $n$ be an integer with $2 \le n \le d-2$. Set $K = \Q(\xi)$. 
If there exist effectively computable positive real numbers $\delta_1, \delta_2, C$, and $X_0$ 
such that, for all integers $x_0, x_1, \ldots , x_n$ with $X = \max\{|x_0|, \ldots , |x_n|\} \ge X_0$, we have
\beq \label{Case_i}
|\Norm_{K / \Q} (x_0 + x_1 \xi + \ldots + x_n \xi^n)| \ge X^{\delta_1} 
\eeq
or
\beq \label{Case_ii}
|x_0 + x_1 \xi + \ldots + x_n \xi^n| \ge X^{-\frac{d}{2} + 1 + \delta_2} \, |\Norm_{K / \Q} (x_0 + x_1 \xi + \ldots + x_n \xi^n)|^{-C}, 
\eeq
then there exist effectively computable, positive $c, \kappa$, depending only on $\xi$, such that 
$$
|\xi - \alpha| > c \, H(\alpha)^{- \frac{d}{2} + \kappa },
$$
for every algebraic number $\alpha$ of degree at most $n$. 
\end{lemma}

\begin{proof}
Let $P(X)$ be an integer polynomial of degree at most $n$ and height at least $X_0$. 
It follows from \eqref{Case_i} that
$$
\prod_\sigma \, | P(\sigma (\xi) )| \ge H(P)^{\delta_1},
$$
where $\sigma$ runs through the complex embeddings of $K$. 
Since $\xi$ is non-real, this shows that there exists $c_1 > 0$, depending on $\xi$, such that 
$$
|P(\xi)| \ge c_1 H(P)^{-\frac{d}{2} + 1 + \frac{\delta_1}{2}}.
$$
Let $\alpha$ be an algebraic number of degree at most $n$ and let $P_\alpha (X)$ denote its minimal 
defining polynomial over the integers. Then, by \cite[Corollary A.1]{BuLiv}, there exists $c_2 > 0$, 
depending on $\xi$, such that
\beq \label{polroot}
|P_\alpha (\xi) | \le c_2 \, H(P) \cdot |\xi - \alpha|, 
\eeq
and the conclusion of the lemma follows from the combination of the last two inequalities.

Without any loss of generality, we can assume that $\delta_1 <  \delta_2 / (2 C)$. 
Assume now that \eqref{Case_ii} holds, but not \eqref{Case_i}. Then,
$$
|P_\alpha (\xi) |  \ge H(P)^{-\frac{d}{2} + 1 + \delta_2} \, H(P)^{-C \delta_1}  \ge H(P)^{-\frac{d}{2} + 1 + \frac{\delta_2}{2}}, 
$$
and we conclude again by \eqref{polroot}.
\end{proof}

\begin{lemma}    \label{xumu}
Let $\bfx$ be a nonzero element of an algebraic number field $K$ and put $M = |\Norm_{K / \Q} (\bfx)|$. 
Then, there exists a unit $u$ in $K$ such that, putting $\mu = \bfx / u$, we have 
$$
C_2^{-1} M^{1/d} \le |\sigma (\mu) | \le C_2 M^{1/d},
$$
for every complex embedding $\sigma$ of $K$, with a constant $C_2 = C_2 (K) \ge 1$.
\end{lemma}

\begin{proof}
This follows from the proof of \cite[Proposition 4.3.12]{EvGy15}. 
\end{proof}

We will apply the following statement to derive Theorems \ref{thw2} to \ref{thw4}. 

\begin{theorem}  \label{thxmu}
Let $\xi$ be a complex non-real algebraic number of degree $d$. 
Let $n$ be an integer with $2 \le n \le d-2$. Set $K = \Q(\xi)$. 
Assume there are effectively computable positive real numbers 
$\kappa_1$ and $h_0$ with the following property. 
For any $\bfx = x_0 + x_1 \xi + \ldots + x_n \xi^n$ in $K$ with $x_0, x_1, \ldots , x_n$ in $\Z$ and
$h(\bfx) > h_0$, upon writing $\bfx = \mu u$ as in Lemma \ref{xumu}, we have
$$
h(\bfx) \le \kappa_1 h_* (\mu).
$$
Then, there exist effectively computable, positive $c, \kappa$, depending only on $\xi$, such that 
$$
|\xi - \alpha| > c \, H(\alpha)^{- \frac{d}{2} + \kappa },
$$
for every algebraic number $\alpha$ of degree at most $n$. 
\end{theorem}

\begin{proof}
Let $\bfx$ be as in the theorem and set $X = \max\{ |x_0|, |x_1|, \ldots , |x_n|\}$. 
There exists an effectively computable constant $C_3 = C_3 (\xi)$ such that 
$$
C_3^{-1} X^{1/d} \le \exp (h(\bfx)) \le C_3 X.
$$
Thus, there are effectively computable constants $X_0 = X_0 (\xi)$ and $C_4 = C_4 (\xi)$ such that 
$h(\bfx) > h_0$ if $X \ge X_0$ and
$$
C_4 |\Norm_{K / \Q} (\bfx)|^{1/d} \ge \exp (h_* (\mu)) \ge \exp ( h(\bfx) / \kappa_1 ) \ge      
(C_3^{-1} X^{1/d})^{1 / \kappa_1}.
$$
This shows that the assumptions of Lemma \ref{transf} are satisfied with suitable 
effectively computable positive real numbers $\delta$ and $X_0$. 
\end{proof}

We end this section with a lemma communicated (with its proof) to us by the referee. 

\begin{lemma}    \label{CM}
Let $\xi$ be a totally complex algebraic number of degree $4$. 
Then, $\Q(\xi)$ is a CM-field if and only if
$1, \xi + \overline{\xi}$ and $\xi \cdot \overline{\xi}$ are linearly dependent over $\Q$.
\end{lemma}

\begin{proof}
Set $\alpha := \xi + \overline{\xi}$ and $\beta := \xi \cdot \overline{\xi}$. 

If $\Q(\xi)$ is a CM-field, then the real numbers $1, \alpha, \beta$ belong to its quadratic real 
subfield, thus they are linearly dependent over $\Q$. 

Conversely, assume there exist integers $a, b, c$ not all zero such that $a  \alpha + b \beta + c = 0$. 
Let $\sigma$ be a field embedding of $\Q(\xi, \overline{\xi})$ into $\C$ that fixes $\xi$. We get     
\begin{align*}
0 & = a \sigma(\alpha) + b \sigma(\beta) + c \\
& = a (\sigma(\alpha) - \alpha) + b (\sigma(\beta) - \beta) \\
& = a (\sigma(\overline{\xi}) - \overline{\xi}) + b ( \xi \sigma(\overline{\xi}) - \xi \overline{\xi}) \\
& = (a + b \xi ) (  \sigma(\overline{\xi}) - \overline{\xi}),
\end{align*}
and $\sigma$ must fix $\overline{\xi}$, since $a$ and $b$ are not both $0$.   
Consequently, the field $\Q(\xi)$ is stable under complex conjugation and thus 
it contains a real quadratic subfield. 
\end{proof}

\section{Proofs}   \label{proofs}

A key ingredient in our proofs is the notion of `matching', introduced in \cite{BajBen}, that we briefly recall.    
Let $a_1, \ldots , a_s$ be non-zero elements in a given number field $K$. 
Let $u_1, \ldots , u_s$ be units in $K$ such that 
\beq \label{eq6}
a_1 u_1 + a_2 u_2 + \ldots + a_s u_s = 0, 
\eeq
and set
$$
\tih = \max\{h_*(u_1), \ldots , h_*(u_s)\}. 
$$
We say that two distinct units $u_\ell, u_k$ in the above equation can be
‘matched’ if we have
$$
h \Bigl( \frac{u_\ell}{u_k} \Bigr) \ll \log_* \tih       
$$
where (as below) the numerical constant
implicit in $\ll$ is effectively computable and depends at most on $K$ and on the heights of the coefficients $a_i$. 
The main strategy for matching units is to show that they are of comparable size at every (infinite) place, 
and deduce that they must then be essentially the same unit up to a multiplicative factor of small height. 
So suppose there exist $1 \le \ell \not= k \le s$ and units 
$u_\ell, u_k$ 
in the equation \eqref{eq6} 
such that for
all complex embedding $\sigma$ of $K$ we have 
\beq \label{eq7}
\bigl| \log | \sigma (u_\ell / u_k) | \bigr| \le \log_*  \tih.
\eeq
Then, writing
$$
a_\ell u_\ell+a_k u_k = a'_k u_k, \quad \hbox{with $a'_k = a_\ell u_\ell /u_k +a_k$,}  
$$
it follows from \eqref{eq7} that
$$
h(a'_k) \ll  \log_* \tih .  
$$
We use effective lower bounds for linear forms in complex logarithms to show that pairs of units can be `matched'. 
Of course, if $K$ is a CM-field, then there is nothing to do, as every unit is matched with its complex conjugate. However, in 
the general case,  complex conjugation has no reason to commute with a Galois embedding.

\subsection{Common preliminaries to the proofs of Theorems \ref{thw2} to \ref{thnf4}.}

Let $n \ge 2$ be an integer and $m$ a nonzero integer. 
Let $\xi$ be a totally complex algebraic number of degree $d \ge n+2$.  
Without any loss of generality, we assume that $\xi$ is an algebraic integer. 
Set $K = \Q(\xi)$ and let $F$ denote the Galois closure of $K$. 
Let $x_0, x_1, \ldots , x_n$ be integers, not all zero, such that 
$$  
\Norm_{K/\Q} (x_0 + x_1\xi + x_2\xi^2 + \ldots + x_n \xi^n) = m. 
$$ 
Set
$$
\bfx = x_0 + x_1\xi + x_2\xi^2 + \ldots + x_n \xi^n.
$$
Let $\iota$ denote the complex conjugation in $\C$. 
Let $\sigma_1, \ldots , \sigma_d$ denote the $d$ embeddings of $K$ into $\C$ 
(recall that $K$ is assumed to be totally complex) 
ordered so that
$$
\sigma_{2j - 1}  = \iota \circ \sigma_{2j}, \quad 1 \le j \le d/2, 
$$
and 
\beq  \label{decr}
|\sigma_1( \bfx  )| =  |\sigma_2( \bfx  )| \ge \cdots \ge |\sigma_{d-1}( \bfx  )| = |\sigma_d( \bfx  )|.
\eeq
For later use, recall that 
$$
d^{-1} \log |\sigma_1( \bfx  )|  \le h(\bfx) \le \log |\sigma_1( \bfx  )|.
$$
and observe that
$$
\sigma_1(\mathbf x) \cdot \sigma_2(\mathbf x) \cdots \sigma_d(\mathbf x) = m. 
$$
We let $u$ and $\mu = \bfx / u$ be given by Lemma \ref{xumu}.

Without any loss of generality, we assume that 
$$
|\sigma_d (\bfx)| < 1,
$$ 
since otherwise we get at once $|\sigma_1 (\bfx)| \le |m|$ and we obtain
$$
d h_*(\mu) \gg  \log_* |m| \ge \log |\sigma_1 (\bfx)| \ge h(\bfx), 
$$
and the desired conclusion follows from Theorem \ref{thxmu}. Here and below, the constants 
implicit in $\gg$ and in $\ll$ are positive, effectively computable and depend at most on $\xi$ and $d$. 

We also assume that      
\beq  \label{xgemu}
h(\bfx) \ge 2 h_*(\mu), 
\eeq
since otherwise the desired conclusion follows from Theorem \ref{thxmu}.     

\begin{lemma}  \label{system}
We keep the above notation. 
Let $\tau_1, \ldots , \tau_{n+2}$ denote $n+2$ distinct complex embeddings of $K$. Then, the system of $n+1$ 
linear equations
$$
\tau_1 (\xi^j) + \sum_{i=2}^{n+2} a_i \tau_i (\xi^j) = 0, \quad \hbox{$0 \le j \le n$}, 
$$
has a unique solution $(a_2, \ldots , a_{n+2})$ in $F^{n+1}$. 
For $i=2, \ldots , n+2$, we have $a_i \not= 0$ and $h_* (a_i) \le C_5$, for a positive, effectively computable constant     
$C_5$ depending only on $\xi$.    
\end{lemma}

\begin{proof}
This is a Cramer system and $a_2, \ldots , a_{n+2}$ are quotients of 
nonzero van der Monde determinants. Their heights are bounded by some constant depending   
on $\tau_1 (\xi), \ldots , \tau_{n+2} (\xi)$. Since there are only finitely many choices 
for $\tau_1, \ldots , \tau_{n+2}$, the heights of $a_2, \ldots , a_{n+2}$ are bounded by some constant 
depending only on $\xi$.   
\end{proof}

\begin{lemma} \label{boundsigma1} 
We keep the above notation. 
We have $|\sigma_1 (\bfx) | \le C_6 |\sigma_{d-n} (\bfx)|$, where 
$C_6 = d \exp ( C_5  [F : \Q] )$. 
\end{lemma}

\begin{proof}
It follows from Lemma \ref{system} applied to the $n+2$ embeddings 
$\sigma_1, \sigma_{d-n}, \ldots , \sigma_d$ that there are nonzero $a_0, \ldots , a_n$ in $F$ of logarithmic height 
at most $C_5$ such that 
$$
\sigma_1 (\bfx) + \sum_{j=0}^n a_j \sigma_{d-j} (\bfx) = 0. 
$$
We conclude by using \eqref{decr}. 
\end{proof}

The following lemma, whose formulation was kindly provided to us by the referee, allows us to clarify  
the proofs of Theorems \ref{thw3} to \ref{thnf4}.    

\begin{lemma} \label{Galois}
Assume that $F = K = \Q(\xi)$ is a Galois extension of $\Q$, so that its Galois group 
is $G = \{\sigma_1, \ldots , \sigma_d\}$. Then, the sets $\{\sigma \sigma_{d-1}, \sigma \sigma_d\}$ with 
$\sigma$ in $G$ form a partition of $G$.    
Furthermore, we have
$$
h \left( \frac{(\sigma \sigma_{d-1}) (u)}{(\sigma \sigma_{d}) (u)} \right) \le 2 \log C_2 + \log \frac{|\sigma_{1} (\bfx)| }{ |\sigma_{d-2} (\bfx)|}, 
$$
for every $\sigma$ in $G$.     
\end{lemma}

\begin{proof}
Define 
$$
\iota' = \sigma_d^{-1} \iota \sigma_d 
\quad \hbox{and}
\quad
T = \{\sigma_{d-1}, \sigma_d \} = \langle \iota \rangle   \sigma_d = \sigma_d \langle \iota' \rangle,
$$
where $ \langle \iota \rangle = \{1, \iota\}$ and $ \langle \iota' \rangle = \{1, \iota'\}$. 
Since the left translates $\sigma T$ of $T$ with $\sigma$ in $G$ are the left translates of the subgroup $ \langle \iota' \rangle$, 
they form a partition of $G$. 
Noticing that $|\sigma (\bfx) | \le |\sigma_{1} (\bfx)| \cdot |\tau(\bfx)| / |\sigma_{d-2} (\bfx)|$ when $\sigma$ and $\tau$ 
are both in $T$ or both in $G \setminus T$, 
we derive that 
$$
|(\sigma \sigma_{d-1}) (\bfx) | \le \frac{|\sigma_{1} (\bfx)| }{ |\sigma_{d-2} (\bfx)|} \cdot |(\sigma \sigma_{d}) (\bfx) |, \quad
\hbox{for any $\sigma$ in $G$.} 
$$
Consequently, for each $\sigma$ in $G$, we get 
$$
\left| \frac{ (\sigma \sigma_{d-1}) (u)}{ (\sigma \sigma_{d}) (u)} \right|
= \left| \frac{ (\sigma \sigma_{d-1}) (\bfx)}{ (\sigma \sigma_{d}) (\bfx) } \cdot 
\frac{ (\sigma \sigma_{d}) (\mu)}{ (\sigma \sigma_{d-1}) (\mu) } \right| 
\le C_2^2 \frac{|\sigma_{1} (\bfx)| }{ |\sigma_{d-2} (\bfx)|}, 
$$
and all the Galois conjugates of the unit $(\sigma \sigma_{d-1}) (u) / (\sigma \sigma_{d}) (u)$       
are of absolute value at most $C_2^2 |\sigma_{1} (\bfx)| / |\sigma_{d-2} (\bfx)|$.    
This implies the result. 
\end{proof}

\subsection{Proofs of Theorems \ref{thw2} to \ref{thnf4}.}

\begin{proof}[Proof of Theorem \ref{thw2} when $d=4$] 
We have $n=2$.     
Let $\eta$ be a generator of the group of units, modulo torsion, of the totally complex quartic field $K$.    
We claim that $\sigma_1 (\eta) / \sigma_2 (\eta)$ is not a root of unity of $F$. 
Indeed, if $k$ were its order, then $(\sigma_1 (\eta))^k$ would be a unit of infinite order in the 
field $\sigma_1 (K) \cap \R$, thus a real quadratic unit and $K$ would be a CM-field. 

Write $u = \rho \eta^b$, for some root of unity $\rho$ in $K$ and some integer $b$. 

It follows from Lemma \ref{system} that there are nonzero $a_2, a_3, a_4$ in $F$ such that 
$$
\sigma_1 (\bfx) + \sum_{i=2}^4 a_i \sigma_{i} (\bfx) = 0,
$$
Upon dividing by $\sigma_1 (\bfx)$
and recalling that $\bfx = \mu u$, we get 
$$
|1 - \alpha v| \le \sum_{i=3}^4 \Bigl| \frac{a_i \sigma_{i} (\bfx)}{\sigma_1 (\bfx)} \Bigr|, 
$$
with
$$ 
\alpha = - \frac{a_2 \sigma_{2} (\mu)}{\sigma_1 (\mu)}, \quad 
v = \frac{\sigma_{2} (u)}{\sigma_1 (u)} 
= \frac{\sigma_{2} (\rho)}{\sigma_1 (\rho) }  \Bigl( \frac{\sigma_{2} (\eta)}{\sigma_1 (\eta)}  \Bigr)^b. 
$$
Setting $\kappa := h( \sigma_1 (\eta) / \sigma_2 (\eta) ) / h(\eta)$, we have 
$\kappa > 0$, since $\sigma_2 (\eta) / \sigma_1 (\eta)$ is not a root of unity, and $\kappa \le 2$.    
Furthermore,    
\beq \label{etaxmu}
h(v) = |b| h(\sigma_1 (\eta) / \sigma_2 (\eta)) = \kappa  |b| h(\eta) = \kappa h(u) \ge \kappa (h(\bfx) - h(\mu)).
\eeq
If $h(v)  \le h(\alpha)$, then, using that $h(\alpha) \ll h_* (\mu)$, we derive from \eqref{etaxmu} that $h(\bfx) \ll h_* (\mu)$ and 
we conclude by applying Theorem \ref{thxmu}. 

If $h(v) > h(\alpha)$, then $|1 - \alpha v|$ is nonzero and,
since $|\sigma_{3} (\bfx)| = |\sigma_4 (\bfx) | < 1$, we get $|1 - \alpha v| \ll |\sigma_1 (\bfx) |^{-1}$, thus 
$\log |1 - \alpha v| \le - h(\bfx) / 2$ if $h(\bfx)$ is large enough. 
Note that $v$ is a unit or a root of unity.    
It then follows from Theorem \ref{uniteq} that    
\beq    \label{minmaj}
\frac{h(\bfx)}{2} \le - \log |1 - \alpha v| \ll h_* (\alpha) \log_* \frac{h_*(v)}{h_* (\alpha)}.
\eeq

Furthermore, by \eqref{xgemu},  we get    
$$
h(v) \ll h(u) \ll (h(\bfx) +  h_* (\mu) ) \ll h(\bfx). 
$$
We derive from \eqref{minmaj} that 
$$
h(\bfx) \ll h_* (\alpha) \ll h_* (\mu),
$$
and we conclude by applying Theorem \ref{thxmu}. 
\end{proof}


\begin{proof}[Proof of Theorem \ref{thw2} when $d \ge 6$] 
We have $n=2$.      
It follows from Lemma \ref{system}  that there are nonzero $a_0, a_1, a_2, b_0, b_1, b_2$ in $F$ 
such that 
$$
\sigma_1 (\xi^j) + \sum_{i=0}^2 a_i \sigma_{d-i} (\xi^j) = 0 
\quad
\hbox{and}
\quad
\sigma_2 (\xi^j) + \sum_{i=0}^2 b_i \sigma_{d-i} (\xi^j) = 0, 
\quad
\hbox{for $j = 0, 1, 2$}.
$$
This implies 
$$
b_0 \sigma_1 (\xi^j) - a_0 \sigma_2 (\xi^j) + \sum_{i=1}^2 (a_i b_0 - a_0 b_i) \sigma_{d-i} (\xi^j)= 0, 
\quad
\hbox{for $j = 0, 1, 2$}.
$$
Consequently we have $a_1 b_0 \not= a_0 b_1$. Thus, $a_1 \sigma_{d-1} (\bfx) + a_0 \sigma_d(\bfx)$ 
and $b_1 \sigma_{d-1} (\bfx) + b_0 \sigma_d(\bfx)$ cannot be both $0$. 
By permuting $\sigma_1$ and $\sigma_2$ if necessary, 
we assume that $a_1 \sigma_{d-1} (\bfx) + a_0 \sigma_d(\bfx)$ is nonzero. 
By linearity, we have
$$
\sigma_1 (\bfx) + \sum_{i=0}^2 a_i \sigma_{d-i} (\bfx) = 0,
$$
thus $\sigma_1 (\bfx) + a_2 \sigma_{d-2} (\bfx)$ is nonzero. Consequently, by dividing by $\sigma_1 (\bfx)$
and recalling that $\bfx = \mu u$, we get 
$$
0 < |1 - \alpha v| \le \sum_{i=0}^1 \Bigl| \frac{a_i \sigma_{d-i} (\bfx)}{\sigma_1 (\bfx)} \Bigr|, 
\quad \hbox{with}
\quad 
\alpha = - \frac{a_2 \sigma_{d-2} (\mu)}{\sigma_1 (\mu)}, v = \frac{\sigma_{d-2} (u)}{\sigma_1 (u)}. 
$$
Since $|\sigma_{d-1} (\bfx)| = |\sigma_d (\bfx) | < 1$, this gives $|1 - \alpha v| \ll |\sigma_1 (\bfx) |^{-1}$, thus 
$\log |1 - \alpha v| \le - h(\bfx) / 2$ if $h(\bfx)$ is large enough. By Theorem \ref{uniteq}, this gives
$$
\frac{h(\bfx)}{2} \le - \log |1 - \alpha v| \ll  h_* (\alpha) \log_* \frac{h_*(v)}{h_* (\alpha)}.
$$
We conclude as in the proof of the case $d=4$. 
\end{proof}


\begin{proof}[Proof of Theorem \ref{thw3}] 
We have $n=3$.      
Set $T = \{\sigma_{d-1}, \sigma_d\}$. Since $G$ has order at least $6$, it has three disjoint left translates of $T$
of the form
$$
\{\sigma, \tau\}, \quad \{\nu, \sigma_{d-2}\}, \quad T = \{\sigma_{d-1}, \sigma_d\},
$$
where $\sigma, \tau, \nu$ are distinct elements in $\{\sigma_1, \ldots , \sigma_{d-3}\}$. 
It follows from Lemma \ref{system}  that there are nonzero $a_0, a_1, a_2, a_3, b_0, b_1, b_2, b_3$ 
in $K = F$ 
such that 
$$
\sigma (\xi^j) + a_3 \nu (\xi^j) + \sum_{i=0}^2 a_i \sigma_{d-i} (\xi^j) = 0 
\quad
\hbox{and}
\quad
\tau (\xi^j) + b_3 \nu (\xi^j) +  \sum_{i=0}^2 b_i \sigma_{d-i} (\xi^j) = 0, 
$$
for $j = 0, 1, 2, 3$. 
This implies 
$$
b_0 \sigma (\xi^j) - a_0 \tau (\xi^j) + (a_3 b_0 - a_0 b_3) \nu (\xi^j) + \sum_{i=1}^2 (a_i b_0 - a_0 b_i) \sigma_{d-i} (\xi^j)= 0,     
\quad
\hbox{for $j = 0, 1, 2, 3$}.     
$$
Consequently we have $a_1 b_0 \not= a_0 b_1$. Thus, $a_1 \sigma_{d-1} (\bfx) + a_0 \sigma_d(\bfx)$     
and $b_1 \sigma_{d-1} (\bfx) + b_0 \sigma_d(\bfx)$ cannot be both $0$.      
By permuting $\sigma$ and $\tau$ if necessary,      
we may assume that $a_1 \sigma_{d-1} (\bfx) + a_0 \sigma_d(\bfx)$ is nonzero.    
Then we get
$$
0 < |\sigma (\bfx) + a_3 \nu(\bfx) + a_2 \sigma_{d-2} (\bfx)| = |a_1 \sigma_{d-1} (\bfx) + a_0 \sigma_d(\bfx)|.
$$
By dividing by $\sigma (\bfx)$ and noting that 
$|\sigma (\bfx)| \ge |\sigma_{d-3} (\bfx)| \gg |\sigma_1 (\bfx)|$, by Lemma \ref{boundsigma1}, 
we get 
$$
0 <  |1 - \alpha v| \le \sum_{i=0}^1 \Bigl| \frac{a_i \sigma_{d-i} (\bfx)}{\sigma (\bfx)} \Bigr| \ll \frac{1}{|\sigma_1 (\bfx)|}, 
$$
with
$$
\alpha = - \frac{a_3 \nu(\mu)}{\sigma (\mu)} \theta - \frac{a_2 \sigma_{d-2} (\mu)}{\sigma (\mu)}, \quad
\theta = \frac{\nu(u)}{\sigma_{d-2} (u)}
\quad v = \frac{\sigma_{d-2} (u)}{\sigma (u)}. 
$$
Note that since $|\sigma_{d-2}(\bfx)| = |\sigma_{d-3}(\bfx)| \gg |\sigma_1(\bfx)|$, 
we get from Lemma \ref{Galois} that $h(\theta)\ll 1$.
By \eqref{xgemu} and assuming $h(\bfx)$ large enough, we proceed as in the above proofs to get    
$$
\frac{h(\bfx)}{2} \le - \log |1 - \alpha v| \ll  h_* (\alpha) \log_*  \frac{h_*(v)}{h_* (\alpha)}.
$$
As $h(v) \le 2 h(u) \le 2 (h(\bfx) + h_*(\mu)) \ll h(\bfx)$, this gives
$$
h(\bfx) \ll h_* (\alpha) 
\ll h_* (\mu),    
$$
and we conclude by applying Theorem \ref{thxmu}.  
\end{proof}

\begin{proof}[Proof of Theorems \ref{thw4} and \ref{thnf4}.]
We have $n=4$ and $d \ge 8$.       
As in the proof of Theorem \ref{thw3}, set $T = \{\sigma_{d-1},\sigma_d\}$. We distinguish three cases. 
Cases 1 and 2 are similar: we assume that certain subsums in unit equations involving conjugates of $\sigma_1(\bfx)$ do not vanish. 
Case 3 then deals with the possibility of vanishing subsums. 
\medskip

\noindent 

\textbf{Case 1:} Suppose $\{\sigma_{d-3},\sigma_{d-2}\}$ is not a left translate of $T$. In this case consider the left-translates of $T$ given by
\[
\{ \sigma, \sigma_{d-3} \}, \quad \{ \nu , \sigma_{d-2} \},
\]
which are disjoint from each other (and from $T$) by assumption. 
Since $\sigma_{d-3} = \iota \circ \sigma_{d-2}$, this implies $\sigma = \iota \circ \nu$.     
We have from Lemma \ref{system} that there are non-zero $a, a_0, a_1, a_2, a_3$ in $K = F$ such that 
\[
\sigma(\xi^j) + a \nu(\xi^j)  + \sum_{i=0}^3 a_i \sigma_{d-i} (\xi^j) = 0
\]
for $j = 0,1,2,3,4$. Consequently, 
\begin{equation}\label{beforematch}
\sigma(\bfx) + a \nu(\bfx) + \sum_{i=0}^3 a_i \sigma_{d-i} (\bfx) = 0.
\end{equation}
We assume in Case 1 that none of the subsums
\[
\sigma(\bfx) + a \nu(\bfx) , \;\, \sigma(\bfx) +  a_3\sigma_{d-3}(\bfx),  \;\, a \nu(\bfx) +  a_2\sigma_{d-2}(\bfx),    
\;\, a_1\sigma_{d-1}(\bfx) +  a_0\sigma_{d}(\bfx)
\]
equals zero, delaying the case of vanishing subsums to Case 3. Since $\sigma(\bfx) + a \nu(\bfx) \neq 0$, we have
\beq  \label{eqcaseI}
0 < |\sigma(\bfx) + a \nu(\bfx) | = \left|  \sum_{i=0}^3 a_i \sigma_{d-i} (\bfx) \right|.
\eeq 
Theorem \ref{uniteq} then gives that   
\[
-\log |1 - \alpha v| \ll h_*(\alpha)\log_* \left( \frac{h_*(v)}{h_*(\alpha)} \right) \ll h_*(\alpha)\log_* \left( \frac{h_*(u)}{h_*(\alpha)} \right), 
\]
where
\[
\alpha =  \frac{-a\nu(\mu)}{\sigma(\mu)} \quad \text{and} \quad v = \frac{\nu(u)}{\sigma(u)}.
\]
Set
\[
\Omega = h_*(\alpha)\log_* \left( \frac{h_*(u)}{h_*(\alpha)} \right).
\]
Thus, we have
\[
\log\frac{|\sigma(\bfx)|}  {|\sigma(\mathbf x) + a\nu(\mathbf x)|} \ll \Omega.
\]
Since $\sigma = \sigma_h$ for an integer $h$ with $1 \le h \le d-4$, it follows from Lemma \ref{boundsigma1} that    
$|\sigma( \bfx)| \gg |\sigma_1(\bfx)|$. 
By \eqref{eqcaseI}, we have $|\sigma(\mathbf x) + a\nu(\mathbf x)| \ll  |\sigma_{d-3}(\bfx)|$. Consequently, the above argument yields
\[
\log\frac{|\sigma_1(\mathbf x)|}{|\sigma_{d-2}(\bfx)|} = \log\frac{|\sigma_1(\bfx)|}{|\sigma_{d-3}(\bfx)|} 
\ll \log\frac{|\sigma (\bfx)|}{|\sigma(\bfx) + a \nu(\bfx)|} \ll \Omega .     
\]

The next step is to apply Lemma \ref{Galois}, whence for any $\tau$ in $G$ we have

\[
h\left(  \frac{ (\tau\sigma_{d-1}) ( \bfx)}{(\tau\sigma_{d}) ( \bfx)} \right) 
\ll h\left(  \frac{(\tau\sigma_{d-1})(u)}{(\tau\sigma_{d})(u)} \right) +  h_*(\mu)\ll  \max\{\Omega, h_*(\mu)\}.
\]
In particular, we get
\beq  \label{Om}
h\left( \frac{\sigma(\bfx)}{\sigma_{d-3}(\bfx)} \right),\;  h\left( \frac{\nu(\bfx)}{\sigma_{d-2}(\bfx)} \right), \; h\left( \frac{\sigma_{d-1}(\bfx)}{\sigma_d(\bfx)} \right) \; \ll \max\{\Omega, h_*(\mu)\}.
\eeq 
Note the upper bound $\Omega$ here depends on $h(\bfx)$. We use this to rewrite \eqref{beforematch} as
\begin{equation}\label{aftermatch}
	\beta \sigma(u) + \gamma \sigma_{d-2}(u) + \eta \sigma_d(u)  = 0, 
\end{equation}
where
\begin{align*}
	\beta = \sigma(\mu) + a_3 \sigma(\mu)&\frac{\sigma_{d-3}(\bfx)}{\sigma(\bfx)}, \qquad    
	\gamma = a \sigma_{d-2}(\mu)\frac{\nu(\bfx)}{\sigma_{d-2}(\bfx)} + a_2\sigma_{d-2}(\mu), \\    
	\eta &= a_1\sigma_d (\mu)\frac{\sigma_{d-1}(\bfx)}{\sigma_d(\bfx)} + a_0\sigma_d(\mu), 
\end{align*}
and each of $\beta, \gamma, \eta$ is nonzero by assumption. We deduce from \eqref{Om} that
\[
\max\{h_*(\beta),h_*(\gamma),h_*(\eta)\} \ll \max\{\Omega, h_*(\mu)\}. 
\]
We normalize the unit equation \eqref{aftermatch} by dividing each of its terms by $\eta \sigma_d(u)$. 
Then, by applying Theorem \ref{threeuniteq}, we obtain 
\[
\max\left\{ h\left(  \frac{\sigma(u)}{\sigma_d(u)} \right), h\left(  \frac{\sigma_{d-2}(u)}{\sigma_d(u)} \right)  \right\} \ll \max\{\Omega, h_*(\mu)\}.
\]
It follows from Lemma \ref{xumu} that
$$
C_2^{-2} \frac{|\sigma_i (u)|}{|\sigma_j(u)|} \le \frac{|\sigma_i (\bfx)|}{|\sigma_j(\bfx)|}  \le C_2^2 \frac{|\sigma_i (u)|}{|\sigma_j(u)|}, 
\quad 1 \le i, j \le d. 
$$
Consequently, by \eqref{decr} and recalling that $|\sigma_1(\bfx)|  \ll |\sigma(\bfx)|$, we get  
$$
|\sigma_d(u)| \ll \min_{1\le i\le d} \, |\sigma_i(u)|  \ll \max_{1\le i\le d} \, |\sigma_i(u)| \ll |\sigma_1(u)| \ll  |\sigma(u)|. 
$$
From these inequalities (and the fact that $u$ is a unit) we have
\begin{align*}
h(u) = \tfrac12 \big(h(u) + h(u^{-1})\big) &\le \tfrac12 \Bigl( \log\max_{1\le i\le d}\, |\sigma_i(u)|  - \log\min_{1\le i\le d}\, |\sigma_i(u)| \Bigr) \\
&\ll \log_* \frac{|\sigma(u)|}{|\sigma_d(u)|} \le  d\cdot h\left(  \frac{\sigma(u)}{\sigma_d(u)} \right).
\end{align*}
This gives us
\[
h(u) \ll \max\{\Omega, h_*(\mu)\}.
\]
If $\Omega < h_*(\mu)$, then $h(u) \ll h_*(\mu)$.
Otherwise, we have
\[
h(u) \ll  \Omega  \ll h_*(\alpha)\log_* \left( \frac{h_*(u)}{h_*(\alpha)} \right), 
\]
thus $h(u) \ll h_*(\alpha)$. Consequently, in both cases, we deduce 
that 
\[
h(\bfx) \le h(\mu) + h(u) \ll h_*(\mu) + h_*(\alpha)  \ll h_*(\mu), 
\]
and we conclude by applying Theorem \ref{thxmu}. 

\medskip

\noindent 
\textbf{Case 2: } Now suppose $\{\sigma_{d-3},\sigma_{d-2}\}$ is itself a left translate of $T$. Letting $\{\sigma, \sigma_{d-4}\}$ be another left translate of $T$ and using Lemma \ref{system} we get an equation
\begin{equation}\label{beforecase2}
\sigma(\bfx) + \sum_{i=0}^4 b_i \sigma_{d-i}(\bfx)	= 0.
\end{equation}
for suitable nonzero $b_0,\ldots,b_4$ in $K$. We assume none of the subsums 
\[
\sigma(\bfx) +  b_4\sigma_{d-4}(\bfx), \quad b_3\sigma_{d-3}(\bfx) +  b_2\sigma_{d-2}(\bfx), \quad b_1\sigma_{d-1}(\bfx) +  b_0\sigma_{d}(\bfx)
\]
vanishes. In this case consider
\[
0 < |\sigma(\bfx) +  b_4\sigma_{d-4}(\bfx)| = \left| \sum_{i=0}^3 b_i \sigma_{d-i}(\bfx) \right| \ll  |\sigma_{d-3}(\bfx)|.
\]
We continue from here as in Case 1, using Lemma \ref{uniteq} to get
\[
-\log|1-\alpha v| \ll h_*(\alpha)\log_*\left(\frac{h_*(u)}{h_*(\alpha)}\right),
\]
with
\[
\alpha = \frac{-b_4\sigma_{d-4}(\mu)}{\sigma(\mu)} \quad \text{and} \quad v = \frac{\sigma_{d-4}(u)}{\sigma(u)}.
\]
As in Case 1 we deduce from Lemma \ref{boundsigma1} that $|\sigma (\bfx) | \gg |\sigma_1 (\bfx)|$, and we find that 
\[
\log\frac{|\sigma_1(\mathbf x)|}{|\sigma_{d-2}(\bfx)|} \ll \Omega, 
\]
where
\[
\Omega = h_*(\alpha)\log_*\left( \frac{h_*(u)}{h_*(\alpha)} \right).
\]
Now we can rewrite \eqref{beforecase2} as
\begin{equation*}
	\beta \sigma(u) + \gamma \sigma_{d-2}(u) + \eta \sigma_d(u)  = 0, 
\end{equation*}
where
\begin{align*}
	\beta = \sigma(\mu) + b_4\sigma(\mu)&\frac{\sigma_{d-4}(\bfx)}{\sigma(\bfx)}, \qquad
	\gamma = b_3\sigma_{d-2}(\mu)\frac{\sigma_{d-3}(\bfx)}{\sigma_{d-2}(\bfx)} + b_2\sigma_{d-2}(\mu), \\
	\eta &= b_1\sigma_d (\mu)\frac{\sigma_{d-1}(\bfx)}{\sigma_d(\bfx)} + b_0\sigma_d(\mu).
\end{align*}
We complete the argument as in Case 1.

\medskip

\noindent 
\textbf{Case 3: } It remains for us to deal with the eventuality of vanishing subsums. 
The first possibly vanishing subsum is $\sigma(\bfx) + a \nu(\bfx)$ from Case 1, involving some $a$ in $K$ which, by 
Lemma \ref{system}, is nonzero and satisfies $h_*(a) \le C_5$. If this subsum vanishes, then we have
$$
(\iota \circ \nu) (\bfx) = - a \nu (\bfx), 
$$
since $\sigma = \iota \circ \nu$. All the other possibly vanishing subsums take the form
$$
a' (\tau \sigma_{d-1}) (\bfx) + a'' (\tau \sigma_d) (\bfx),
$$
for some $\tau$ in the Galois group $G$ of $K$ and some $a', a''$ in $K$ which, by 
Lemma \ref{system}, are nonzero and satisfy $h_*(a'/a'') \le C_5$. If such a subsum vanishes, then we have
$$
(\iota \circ \sigma_d) (\bfx) = - \tau^{-1} (a'' / a') \sigma_d (\bfx), 
$$
since $\sigma_{d-1} = \iota \circ \sigma_d$. Consequently, assuming that a subsum vanishes, we have in all cases
$$
(\iota \circ \tau) (\bfx) =  b \tau (\bfx), 
$$
for some $\tau$ in $G$ and some nonzero $b$ in $K$ with $h_* (b) \le C_5$. 
Writing $\otau = \iota \circ \tau$ and recalling that $\bfx = \mu u$, this yields
$$
\otau(u) = \eps \tau(u), \quad \hbox{with $\eps = b \tau(\mu) / \otau(\mu)$}.
$$
Thus, $\eps$ is a unit of $K$ with $h_*(\eps) \ll h_*(\mu)$. 

Let $U$ denote the group of units of $K$, let $U_{\rm tor}$ denote its torsion part (that is, the group of roots of unity in $K$), and let 
$\{\eta_1,\ldots,\eta_r\}$ be a fundamental system of units in $K$. Any $v$ in $U$ factors as 
$$
v = \rho \eta_1^{t_1} \ldots \eta_r^{t_r},
$$
for a unique root of unity $\rho$ in $U_{\rm tor}$ and a unique integer vector 
$\undt = (t_1, \ldots , t_r)$ in $\Z^r$. Then, we have
$$
\otau (v) / \tau (v) = \rho' \eta_1^{\ell_1 (\undt)} \ldots \eta_r^{\ell_r(\undt)},
$$
for some $\Z$-linear forms $\ell_1, \ldots , \ell_r$ from $\Z^r$ to $\Z$ depending only on $\tau$, and some 
$\rho'$ in $U_{\rm tor}$ that varies with $v$. 
Let $\cL : \Z^r \to \Z^r$ be the $\Z$-linear map given by
$$
\cL(\undt) = (\ell_1(\undt), \ldots , \ell_r(\undt)), \quad \hbox{for $\undt$ in $\Z^r$.} 
$$

Write the units $u$ and $\eps$ as
$$
u  = \rho_u \eta_1^{b_1} \ldots \eta_r^{b_r}, \quad
\eps  = \rho_\eps \eta_1^{k_1} \ldots \eta_r^{k_r},
$$
with $\rho_u, \rho_\eps$ in $U_{\rm tor}$ and 
$\undb = (b_1, \ldots , b_r)$, $\undk = (k_1, \ldots , k_r)$ in $\Z^r$. 
Then, the relation $\otau(u) = \eps \tau(u)$ implies that $\cL(\undb) = \undk$. If $\cL$ is
injective, this uniquely determines $\undb$ and we obtain
$$
h_* (u) \asymp \| \undb \| \ll \| \undk \| \asymp h_* (\eps) \ll h_*(\mu),
$$
where $\| \cdot \|$ denotes the maximum norm and, here and below, $\asymp$ means that 
both inequalities $\ll$ and $\gg$ hold. 
We get $h(\bfx) \ll h_*(\mu)$ and we conclude by applying Theorem \ref{thxmu}.  

In general, we can write $\undk = \cL(\undb) = \cL (\undt)$ for some $\undt = (t_1, \ldots , t_r)$ in $\Z^r$ satisfying 
$\| \undt \| \le c(\cL) \| \undk \|$, for a positive, effectively computable 
$c(\cL)$ that depends only on $\cL$, thus only on $\tau$. 
Putting $u_1 = \eta_1^{t_1} \ldots \eta_r^{t_r}$, we thus have
$$
h_* (u_1) \asymp \| \undt \| \ll \| \undk \| \ll h_* (\mu)
$$
and
$$
\otau (u) / \tau (u) = \rho'' \otau (u_1) / \tau (u_1),
$$
for some $\rho''$ in $U_{\rm tor}$. 
Observe that $\rho'' = \otau(u/u_1) / \tau(u/u_1)$, where $u/u_1$ is in $U$. Since $U_{\rm tor}$ is a finite set, we may write 
$\rho'' = \otau(u_2) / \tau (u_2)$ for some $u_2$ from a finite, effectively computable subset of $U$. 
Putting $u_0 = u_1 u_2$ with that choice of $u_2$, we find that 
$$
h_*(u_0) \le h_*(u_1) + h_* (u_2) \ll h_* (\mu) + 1 \ll h_* (\mu)
$$
and
$$
\otau(u / u_0) = \tau (u / u_0). 
$$
Identifying $\varphi = \tau^{-1}\otau$ as an element of $\Gal(K/\Q)$, we see that $u/u_0$ 
must lie in the fixed proper subfield $K^{\varphi}$ of $K$. 
Observe that an element $a$ of $K$ belongs to $K^\varphi$ if and only if $\otau(a) = \tau (a)$, that is, 
if and only if $\tau(a)$ is real. Thus, $\tau$ is a real embedding of $K^\varphi$. 

Note that if $K^\varphi = \Q$, then $u/u_0 = \pm 1$, hence $h_* (u) = h_* (u_0) \ll h(\mu)$, and we are done. 

Write $v = u/u_0$ and set $\M$ to be the $\Z$-module
\[
\M = \{x_0 + x_1\xi + x_2\xi^2 + x_3\xi^3 + x_4\xi^4 \; : \; x_0,\ldots,x_4 \in \Z\}.
\]
Since $\mu u_0 v = \bfx$ is in $\M$, we have  
\[
v \in K^{\varphi}\cap (\mu u_0)^{-1}\M.
\]
To estimate the rank of the $\Z$-module $K^{\varphi}\cap (\mu u_0)^{-1}\M$, we use the 
following lemma, which follows from \cite[Lemma 6.1]{BuEv09}.

\begin{lemma}[Bugeaud-Evertse]  \label{LemmaBuEv}   
Let $n$ be a positive integer. Let $\eta$ be a complex non-real algebraic number with $\deg \eta > 2n - 2$.  
Suppose that $1, \eta + \overline\eta, \eta \cdot\overline\eta$ are linearly independent over $\Q$. Then,
for any nonzero $\lambda$ in $\C$, the $\Z$-module
\[
\R \cap \{ \lambda f(\eta) : f(X) \in \Z[X],  \deg f \le n \}
\]
has rank at most $(n+1)/2$.
\end{lemma}

For $n=4$ and each $i=1, \ldots , d$, the number $\eta = \sigma_i (\xi)$ satisfies the assumption of 
Lemma \ref{LemmaBuEv}. Consequently, for any nonzero $\lambda$ in $\C$, the $\Z$-module
$$
\R \cap \lambda \sigma_i (\M)
$$
has rank $\le 5/2$, thus in fact rank at most $2$. 
We choose $i$ so that $\sigma_i = \tau$ and set $\lambda = \tau (\mu u_0)^{-1}$. 
Since $\tau(K^\varphi)$ is included in $\R$, we deduce that the image under $\tau$ of 
$$
K^\varphi\cap(\mu u_0)^{-1}\M
$$
has rank at most $2$ and thus this $\Z$-module also has rank at most $2$.

Thus, there exist $\alpha_1,\alpha_2$ in $K^\varphi$ such that
\begin{equation}\label{alpha12}
	K^\varphi \cap (\mu u_0)^{-1}\mathfrak M \; \subset \; \{ y_1\alpha_1 + y_2\alpha_2 \; : \; y_1,y_2 \in \Z   \}.
\end{equation}
Let $\undx = (x_0, x_1, \ldots , x_4)$ be an integer tuple and assume that 
\begin{align*}
	(\mu u_0)^{-1}(x_0 + x_1\xi &+ x_2\xi^2 + x_3 \xi^3 + x_4 \xi^4) \\
	&- \varphi\left((\mu u_0)^{-1}(x_0 + x_1\xi + x_2\xi^2 + x_3 \xi^3 + x_4 \xi^4)\right) = 0.
\end{align*}
Expressing the left hand side in the basis $1, \xi, \ldots , \xi^{d-1}$ of $\Q(\xi)$, there are 
linear forms $\ell_0, \ldots , \ell_{d-1}$ with rational coefficients of heights 
$\ll h_*(\mu)$ such that 
$$
\ell_0 (\undx) + \ell_1 (\undx) \xi + \ldots + \ell_{d-1} (\undx) \xi^{d-1} = 0.
$$
Consequently, $\undx$ is a solution of the system of equalities
\beq \label{syst}
\ell_0 (\undy) = \ell_1 (\undy) = \ldots =  \ell_{d-1} (\undy) = 0, \quad \hbox{in $\undy \in \Z^5$}. 
\eeq
It follows from \eqref{alpha12} that the set of solutions to \eqref{syst} is a $\Z$-module of rank at most $2$. 
By our observation on the heights of the coefficients of the linear forms $\ell_0, \ldots , \ell_{d-1}$, this 
set has a $\Z$-basis of vectors whose coefficients are rationals with heights 
$\ll h_*(\mu)$. 
The upshot is that there exist $\alpha_1,\alpha_2$ as in the description of \eqref{alpha12} that moreover satisfy
\[
\max\{h(\alpha_1),h(\alpha_2)\} \ll h_*(\mu).
\]
Recall we have $\mathbf x = \mu u_0 v$.  Setting $L = \Q(\alpha_1,\alpha_2)$, we are now interested in the units $v$ in $L$ that satisfy
\[
v \in L\cap (\mu u_0)^{-1}\M.
\]
If $L\cap (\mu u_0)^{-1}\M$ has rank 1 then $L \cap (\mu u_0)^{-1}\mathfrak M = \Z\alpha_3$ 
for some fixed $\alpha_3$ in $L$. 
We have $h(\alpha_3)\ll h_*(\mu)$. If $y_1\alpha_3$ is a unit for some integer $y_1$, 
then $y_1$ must divide the denominator of $\Norm(\alpha_3)$, 
and so in particular $\log |y_1| \ll h_*(\mu)$ as well. All in all, we get 
\[
h(\mathbf x) = h(\mu u_0 \cdot y_1\alpha_3) \ll h_*(\mu), 
\]
whence one can apply Theorem \ref{thxmu}. Thus we may now assume
\[
\mathrm{rank} \bigl( L\cap (\mu u_0)^{-1}\M \bigr) = 2.
\]

First, suppose the subfield $\Q(\alpha_1/\alpha_2)$ of $L$ has degree 3 or higher over $\Q$. 
This case can be dealt with analogous to the effective resolution of Thue equations. 
Choose three embeddings $\tau_1,\tau_2,\tau_3$ of $L$ such that 
\[
\tau_j(\alpha_1/\alpha_2) \neq \tau_k(\alpha_1/\alpha_2), \qquad 1 \le j < k \le 3,
\]
and define
\[
c_i = \det\begin{pmatrix} \tau_j(\alpha_1) & \tau_k(\alpha_1) \\ \tau_j(\alpha_2) & \tau_k(\alpha_2) \end{pmatrix}, 
\]
for each cyclic permutation $(i,j,k)$ of $(1,2,3)$. Then each $c_i$ is a nonzero element of $L$ with
$$
h(c_i) \ll \max\{h(\alpha_1),h(\alpha_2)\} \ll h_*(\mu),
$$
and we have
\[
c_1\tau_1(\alpha_i) + c_2\tau_2(\alpha_i) + c_3\tau_3(\alpha_i) = 0, \quad \hbox{for $i = 1,2$.}
\]
Since the unit $v$ is a $\Z$-linear combination of $\alpha_1$ and $\alpha_2$, it also satisfies the linear relation   
\[
c_1\tau_1(v) + c_2\tau_2(v) + c_3\tau_3(v) = 0.
\]
After dividing all terms of this relation by $c_3\tau_3(v)$,
it follows from Theorem \ref{threeuniteq} that
\begin{equation}\label{htau}
	\max\{ h(\tau_1 (v) /\tau_3 (v)), h(\tau_2 (v)/\tau_3 (v) )  \} \ll h_*(\mu).
\end{equation}
Now the $\tau_1,\tau_2,\tau_3$ can be identified as restrictions to $L$ of three of the embeddings $\sigma_1,\ldots,\sigma_d$ 
of $K$ that we started with. Since we were allowed to choose any distinct embeddings of $\Q(\alpha_1/\alpha_2)$, 
we can assume $\sigma_d$ restricts to $\tau_3$ on $\Q(\alpha_1/\alpha_2)$.  
If $\sigma_1$ also restricts to $\tau_3$ on $\Q(\alpha_1/\alpha_2)$, this means
\[
\sigma_1(v/\alpha_2) = \sigma_d(v/\alpha_2), \quad \text{i.e.} \quad \frac{\sigma_1(\mathbf x)}{\sigma_d(\mathbf x)} 
= \frac{\sigma_1(\mu u_0)}{\sigma_d(\mu u_0)}\cdot  \frac{\sigma_d(\alpha_2)}{\sigma_1(\alpha_2)}.
\]
Recalling that $|\sigma_d(\mathbf x)| < 1$, this gives $\log |\sigma_1(\mathbf x)|  \ll h_* (\mu)$ and we conclude that 
$h(\bfx) \ll h_*(\mu)$.

Otherwise, we can assume $\sigma_1$ restricts to $\tau_1$, in which case from \eqref{htau} we get
\[
h \left( \frac{\sigma_1(v)}{\sigma_d(v)}  \right) \ll h_*(\mu), 
\]
and thus again $h(\bfx) \ll h_*(\mu)$.
Applying Theorem \ref{thxmu} completes the argument.

Next, we deal with the case where the subfield $\Q(\alpha_1/\alpha_2)$ of $L$ is quadratic. 
Since $L$ has a real embedding, $\Q(\alpha_1/\alpha_2)$ must in fact be real quadratic. 
We are looking for units in $L$ of the form $v = y_1\alpha_1+y_2\alpha_2$, where $y_1, y_2$ are in $\Z$. 
Thus $\tilde v = v/\alpha_2$ is quadratic. Recalling $u = u_0v$ we now write $u = u_0\alpha_2\tilde v$.


Suppose there exist indices $k,\ell$ with $1\le k<\ell \le d$ and $|\sigma_k(\tilde v)| < |\sigma_\ell(\tilde v)|$; in particular this means $\sigma_k$ and $\sigma_\ell$ restrict to the two distinct embeddings of the quadratic field $\Q(\alpha_1/\alpha_2)$. We have the pair of inequalities
\begin{align*}
|\sigma_k(\mu u_0 \alpha_2 \tilde v)| &\ge |\sigma_\ell(\mu u_0 \alpha_2 \tilde v)| \\
|\sigma_k(\tilde v)| &< |\sigma_\ell(\tilde v)|
\end{align*}
which yield
\begin{equation}\label{wrongorder}
1 < \left|   \frac{\sigma_\ell(\tilde v)}{\sigma_k(\tilde v)} \right|\le \left|  \frac{\sigma_k(\mu u_0 \alpha_2 )}{\sigma_\ell(\mu u_0 \alpha_2 )} \right|.
\end{equation}
Since $v$ is a unit in $L$ we have
\[
h(v) \ll \log \max_{1\le i,j \le d} \left| \frac{\sigma_i(\alpha_2\tilde v)}{\sigma_j(\alpha_2\tilde v)} \right| 
\ll  \log  \left|   \frac{\sigma_\ell(\tilde v)}{\sigma_k(\tilde v)} \right| + \log \max_{1\le i,j \le d} \left| \frac{\sigma_i(\alpha_2)}{\sigma_j(\alpha_2)} \right|
\]
and we get
\[
h(v) \ll \log \left|   \frac{\sigma_\ell(\tilde v)}{\sigma_k(\tilde v)} \right|  + h_*(\mu).
\]
Coupled with \eqref{wrongorder} this gives $h(v) \ll h_*(\mu)$ and thus $h(\bfx) \ll h_*(\mu)$ as desired. 
In the remaining case we must have
\beq \label{equal}
\sigma_1(\tilde v)  = \cdots = \sigma_{d/2}(\tilde v), \quad  \sigma_{d/2 + 1}(\tilde v) = \cdots = \sigma_d(\tilde v), 
\quad |\sigma_{d/2}(\tilde v)| \ge  |\sigma_{d/2 + 1}(\tilde v)|.     
\eeq
From Lemma \ref{boundsigma1} (with $n=4$) we have $|\sigma_1(\bfx)| \ll |\sigma_{d-4}(\bfx)|$ which gives   
\begin{equation}\label{only2}
|\sigma_1(\tilde v)| \ll \Bigl| \Bigl( \frac{\sigma_{d-4}(\mu u_0\alpha_2)}{\sigma_1(\mu u_0\alpha_2)} \Bigr)\sigma_{d-4}(\tilde v) \Bigr|.  
\end{equation}
In this case if the degree $d$ of the extension $K / \Q$ satisfies $d \ge 10$, then 
we necessarily have $\sigma_d(\tilde v) = \sigma_{d-4}(\tilde v)$ and 
we eventually get
\[
h(v) \ll \log\left| \frac{\sigma_1(\tilde v)}{\sigma_d(\tilde v)} \right| + h_*(\mu) \ll h_*(\mu).
\]
This again yields $h(\bfx) \ll h_*(\mu)$ and completes the last remaining case of Theorem \ref{thnf4}.  
Now, it only remains for us to deal with the case $d=8$ of Theorem \ref{thw4}. 
In view of \eqref{equal}, we have
$$
\left| \log\left| \frac{\sigma_j(\bfx)}{\sigma_k(\bfx)} \right|  \right| \ll h_*(\mu), \quad 5 \le j, k \le 8. 
$$
Consequently, by setting $P(X) = x_0 + x_1 X + \ldots + x_4 X^4$, 
there exists an effectively computable positive 
constant $C_7 = C_7(\xi)$ such that 
\begin{equation}\label{allm}
|m|^{-C_7} \le \frac{|P (\sigma_j (\xi))|}{|P (\sigma_k (\xi))|} \le |m|^{C_7}, \quad 5 \le j, k \le 8.
\end{equation}
Since the product of the $|P (\sigma_j (\xi))|$ over $j = 1, \ldots , 8$ is equal to $|m|$, we have
\begin{equation}\label{lastone}
|m|\cdot\prod_{j=1}^4 |P(\sigma_j(\xi))|^{-1} = \prod_{j=5}^8 |P(\sigma_j(\xi))|.
\end{equation}
Note that there exists an effectively computable positive constant $C_8 = C_8(\xi)$ such that
\[
|P (\sigma_1 (\xi))| \le C_8 H(P).
\]
Recalling that the $\sigma_j(\bfx) = P(\sigma_j(\xi))$ are ordered in decreasing order of their moduli, this means
\[
\prod_{j=1}^4 |P(\sigma_j(\xi))|^{-1} \ge |P(\sigma_1(\xi))|^{-4} \ge C_8^{-4} H(P)^{-4}.
\]
Meanwhile from \eqref{allm} we have
\[
\prod_{j=5}^8 |P(\sigma_j(\xi))| \le |m|^{3C_7}|P(\sigma_d(\xi))|^4.
\]
Combining the above two inequalities with \eqref{lastone} we get that
\[
C_8^{-4} |m| \cdot H(P)^{-4} \le |m|^{3C_7}|P(\sigma_d(\xi))|^4
\]
and therefore $|P(\xi)| \ge |P(\sigma_d(\xi))| \ge C_8^{-1}|m|^{-C_7} H(P)^{-1}$. We conclude by applying Lemma \ref{transf}.

\end{proof}

\section*{Acknowledgement}
We are grateful to the referee for a very meticulous reading and numerous suggestions  which helped us to 
considerably improve the presentation of the proofs and to correct several inaccuracies.  


\begin{thebibliography}{99}


\bibitem{Baj23}
P. Bajpai,
{\it Effective methods for norm-form equations},
Math. Ann. 387 (2023), 1271--1288.        


\bibitem{Bajcomp23}
P. Bajpai,
{\it Explicitly solving a norm-form equation in several variables},
Forthcoming. 

\bibitem{BajBen}
P. Bajpai and M.A. Bennett,
{\it Effective $S$-unit equations beyond 3 terms: Newman's conjecture},
Acta Arith.
To appear. 

\bibitem{Ba66}
A. Baker,
{\it Linear forms in the logarithms of algebraic numbers I--IV}, 
Mathematika 13 (1966), 204--216; 14 (1967), 102--107 and 220--224; 15 (1968), 204--216.

\bibitem{Ba73}
A. Baker,
{\it  A sharpening of the bounds for linear forms in logarithms~II},
{ Acta Arith.} {24} (1973), 33--36.




\bibitem{Bo93}
E. Bombieri,
{\it Effective Diophantine approximation on ${\bf G}_m$}, 
Ann. Scuola Norm. Sup. Pisa Cl. Sci. 20 (1993), 61--89.

\bibitem{BuLiv}
Y. Bugeaud,
Approximation by algebraic numbers.
Cambridge Tracts in Mathematics 160, Cambridge University Press, 2004.

\bibitem{Bu09}
Y. Bugeaud,  
{\it On the approximation to algebraic numbers by algebraic numbers}, 
Glas. Mat. Ser. III 44 (2009), 323--331.


\bibitem{Bu18b}
Y. Bugeaud, 
Linear forms in logarithms and applications. 
IRMA Lectures in Mathematics and Theoretical Physics 28, 
European Mathematical Society, Z\"urich, 2018.


\bibitem{Bu23}
Y. Bugeaud, 
{\it $B'$},
Publ. Math. Debrecen 103 (2023), 499--533.   

\bibitem{BuEv09}
Y. Bugeaud and J.-H. Evertse,  
{\it Approximation of complex algebraic numbers by algebraic numbers of bounded degree}, 
 Ann. Sc. Norm. Super. Pisa Cl. Sci. (5) 8 (2009), 333--368. 


\bibitem{EvGy15}
J. H. Evertse and K. Gy\H{o}ry,
Unit equations in Diophantine number theory.
Cambridge Studies in Advanced Mathematics 146, Cambridge University Press, 2015. 


\bibitem{Fe71}
{N. I. ~Fel'dman},
{\it An effective refinement of the exponent in Liouville's theorem}, 
Iz. Akad. Nauk SSSR, Ser. Mat. 35 (1971), 973--990 (in Russian);
English translation in Math. USSR. Izv. 5 (1971) 985--1002. 


\bibitem{Gu67}
R. G\"uting,
{\it Polynomials with multiple zeros},
Mathematika 14 (1967), 149--159.


\bibitem{Ko39}
J. F. Koksma,
{\it \"Uber die Mahlersche Klasseneinteilung der transzendenten Zahlen
und die Approximation komplexer Zahlen durch algebraische Zahlen},
Monatsh. Math. Phys. 48 (1939), 176--189.

\bibitem{LMFDB}
The LMFDB Collaboration,
{\it The L-functions and modular forms database},
https://www.lmfdb.org.

\bibitem{Mah32}
K. Mahler,
{\it Zur Approximation der Exponentialfunktionen und des
Logarithmus. I, II},
J. reine angew. Math. 166 (1932), 118--150.



\bibitem{Ro55}
{K. F. Roth},
{\it Rational approximations to algebraic numbers},
{Mathematika} {2} (1955), 1--20; corrigendum, 168.

\bibitem{Schm70}
{W. M. Schmidt},
{\it Simultaneous approximation to algebraic numbers by rationals},
Acta Math. 125 (1970), 189--201.

\bibitem{Schm72}
{W.M.Schmidt},
{\it Norm form equations},
Ann. of Math. (2) 96 no. 3 (1972), 526--551.

\bibitem{SchmLN}     
W. M. Schmidt,
Diophantine Approximation.
Lecture Notes in Math. {785}, Springer, Berlin, 1980.


\bibitem{Vo83}
P. Vojta,
Integral points on varieties. 
PhD thesis, Harvard University, 1993.  


\bibitem{WaLiv}
M. Waldschmidt,
Diophantine Approximation on Linear Algebraic Groups. 
Transcendence Properties of the Exponential Function in Several Variables, 
Grundlehren Math. Wiss. 326, Springer, Berlin, 2000.



\bibitem{Wir61}
{E. Wirsing},
{\it Approximation mit algebraischen Zahlen beschr\"ankten
Grades}, J. reine angew. Math. {206} (1961), 67--77.

\end{thebibliography}
\end{document}